\newcommand{\E}{\mathbb{E}}
\newcommand{\Prob}{\mathbb{P}}
\newcommand{\R}{\mathbb{R}}
\newcommand{\N}{\mathbb{N}}
\newcommand{\ud}{\mathrm{d}}
\newcommand{\udx}{\mathrm{dx}}
\newcommand{\udy}{\mathrm{dy}}
\newcommand{\var}{\mathrm{Var}}
\newcommand{\cov}{\mathrm{Cov}}
\newcommand{\tr}{\mathrm{Tr}}
\renewcommand\Re{\operatorname{Re}}
\renewcommand\Im{\operatorname{Im}}
\newcommand{\GUE}{\mathrm{GUE}}
\newcommand{\GOE}{\mathrm{GOE}}
\newcommand{\GSE}{\mathrm{GSE}}
\theoremstyle{plain}
  \newtheorem{theorem}{Theorem}
  \newtheorem{lemma}[theorem]{Lemma}
  \newtheorem{corollary}[theorem]{Corollary}
\theoremstyle{definition}
  \newtheorem{definition}[theorem]{Definition}
  \newtheorem{remark}[theorem]{Remark}
\begin{document}
\title{Gaussian Fluctuations of Eigenvalues in Wigner Random Matrices}
\author[S. O'Rourke]{Sean O'Rourke}
\address{Department of Mathematics, University of California, Davis, One Shields Avenue, Davis, CA 95616-8633  }
\email{sdorourk@math.ucdavis.edu}

\begin{abstract}
We study the fluctuations of eigenvalues from a class of Wigner random matrices that generalize the Gaussian orthogonal ensemble.  

We begin by considering an $n \times n$ matrix from the Gaussian orthogonal ensemble (GOE) or Gaussian symplectic ensemble (GSE) and let $x_k$ denote eigenvalue number $k$.  Under the condition that both $k$ and $n-k$ tend to infinity as $n \rightarrow \infty$, we show that $x_k$ is normally distributed in the limit.  

We also consider the joint limit distribution of eigenvalues $(x_{k_1},\ldots,x_{k_m})$ from the GOE or GSE where $k_1$, $n-k_m$ and $k_{i+1} - k_i$, $1 \leq i \leq m-1$, tend to infinity with $n$.  The result in each case is an $m$-dimensional normal distribution.  

Using a recent universality result by Tao and Vu, we extend our results to a class of Wigner real symmetric matrices with non-Gaussian entries that have an exponentially decaying distribution and whose first four moments match the Gaussian moments. 
\end{abstract}

\maketitle

\section{Introduction and Formulation of Results}

In this paper, we study the classical ensemble of random matrices introduced by Eugene Wigner in the 1950s, \cite{wig}.  In particular, we will consider Wigner real symmetric matrices and Wigner Hermitian matrices.  We begin with the real symmetric case.  

\subsection{Real Symmetric Wigner Matrices}

Following Tao and Vu in \cite{tao}, we define a class of Wigner real symmetric matrices with exponential decay.

\begin{definition}[Wigner real symmetric matrices]
Let $n$ be a large number.  A Wigner real symmetric matrix (of size $n$) is defined as a random real symmetric $n \times n$ matrix $M_n = \left( m_{ij} \right)_{1\leq i,j \leq n}$ where
\begin{itemize}
	\item For $1 \leq i < j \leq n$, $m_{ij}$ are i.i.d. real random variables.
	\item For $ 1 \leq i \leq n$, $m_{ii}$ are i.i.d. real random variables.
	\item The entries $m_{ij}$ have exponential decay i.e. there exists constants $C,C'>0$ such that $\Prob\left(| m_{ij} | \geq t^C \right) \leq \exp(-t)$, for all $t \geq C'$.
\end{itemize}
\end{definition}

The prototypical example of a Wigner real symmetric matrix is the Gaussian orthogonal ensemble (GOE).  The GOE is defined by the probability distribution on the space of $n \times n$ real symmetric matrices given by 
\begin{equation} \label{prob_dist_GOE}
	\Prob(\ud H) = C^{(\beta)}_n e^{-\frac{\beta}{2}\tr{H^2}} \ud H
\end{equation}
where $\beta = 1$ and $\ud H$ refers to the Lebesgue measure on the $\frac{n(n+1)}{2}$ different elements of the matrix.  So for a matrix $H=\left( h_{ij} \right)_{1 \leq i,j \leq n}$ drawn from the GOE, the elements
\begin{equation*}
	\{ h_{ij}; 1 \leq i \leq j \leq n \}
\end{equation*}
are independent Gaussian random variables with zero mean and variance $\frac{1+\delta_{ij}}{2}$.  

\subsection{Wigner Hermitian Matrices}
Similar to the real symmetric case, we define Wigner Hermitian matrices.  

\begin{definition}[Wigner Hermitian matrices]
Let $n$ be a large number.  A Wigner Hermitian matrix (of size $n$) is defined as a Hermitian $n \times n$ matrix $M_n = \left( m_{ij} \right)_{1\leq i,j \leq n}$ where
\begin{itemize}
	\item $\{ \Re m_{ij}, \Im m_{ij} : 1 \leq i < j \leq n \}$ are a collection of i.i.d. real random variables.
	\item For $ 1 \leq i \leq n$, $m_{ii}$ are i.i.d. real random variables.
	\item The entries $m_{ij}$ have exponential decay i.e. there exists constants $C,C'>0$ such that $\Prob\left(| m_{ij} | \geq t^C \right) \leq \exp(-t)$, for all $t \geq C'$.
\end{itemize}
\end{definition}

The classical example of a Wigner Hermitian matrix is the Gaussian unitary ensemble (GUE).  The GUE is defined by the probability distribution given in \eqref{prob_dist_GOE} with $\beta = 2$, but on the space of $n \times n$ Hermitian matrices.  Thus for a matrix $H=\left( h_{ij} \right)_{1 \leq i,j \leq n}$ drawn from the GUE, the $n^2$ different elements of the matrix, 
\begin{equation*}
	\{\Re h_{ij}; 1 \leq i \leq j \leq n, \Im  h_{ij}; 1 \leq i < j \leq n \}
\end{equation*}
are independent Gaussian random variables with zero mean and variance $\frac{1+\delta_{ij}}{4}$.

\subsection{Gaussian Symplectic Ensemble}
Historically, quaternion self-dual Hermitian Wigner matrices have not been studied.  We will, however, introduce the Gaussian symplectic ensemble (GSE).  The GSE is defined by the probability density given in \eqref{prob_dist_GOE} with $\beta = 4$, but on the space of $n \times n$ quaternion self-dual Hermitian matrices.  For a matrix $H=\left( h_{ij} \right)_{1 \leq i,j \leq n}$ drawn from the GSE, there are $n(2n-1)$ distinct real members of the matrix,
\begin{equation*}
	\{ h^{(0)}_{jk}; 1 \leq j \leq k \leq n, h^{(i)}_{jk}; 1 \leq j < k \leq n \text{ for }i=1,2,3 \}
\end{equation*}
where each quaternion entry is given by 
\begin{equation*}
	h_{jk} = h^{(0)}_{jk} + h^{(1)}_{jk}e_1 + h^{(2)}_{jk} e_2 + h^{(3)}_{jk}e_3.
\end{equation*}
Here $\{1, e_1, e_2, e_3\}$ denotes the standard quaternion basis with the usual multiplication table,
\begin{align*}
	&e_1^2 = e_2^2 = e_3^2 = -1 \\
	e_1 e_2 = -e_2 e_1 = e_3 \qquad &e_1 e_3 = -e_3 e_1 = e_2 \qquad e_1 e_2 = -e_2 e_1 = e_3 .
\end{align*}

The entries are again independent Gaussian random variables with zero mean.  For $j<k$, $h^{(i)}_{jk}$ has variance $\frac{1}{8}$ for each $i=0,1,2,3$ and $h^{(0)}_{jj}$ has variance $\frac{1}{4}$.  

\subsection{Distribution of Eigenvalues for the Gaussian Ensembles}
In each of the Gaussian ensembles above, there is an induced measure of the corresponding $n$ real eigenvalues $x_i$.  The induced measure can be calculated (see Mehta's book, \cite{me}) and it's density is given by 
\begin{equation*}
	p_n^{(\beta)}(x_1,\ldots,x_n) = \frac{1}{Z_n^{(\beta)}} \prod_{1 \leq i < j \leq n} \left| x_i - x_j \right|^\beta e^{-\frac{\beta}{2}(x_1^2 + \cdots + x_n^2)}
\end{equation*}
where $\beta = 1,2,$ or $4$ corresponds to the GOE, GUE, or GSE, respectively and $Z_n^{(\beta)}$ is a normalizing constant.  

Since the spectrum is simple with probability $1$, we can further order the eigenvalues so that $x_1 < x_2 < \ldots < x_n$.  This ordering gives the probability density $\rho^{(\beta)}_{n,n}(x_1,\ldots,x_n)$ of the ordered eigenvalues on the space
\begin{equation*}
	\R^n_{\mathrm{ord}} = \{(x_1,\ldots,x_n) : x_1 < \ldots < x_n \}.
\end{equation*}
Here
\begin{equation*}
	\rho_{n,n}^{(\beta)}(x_1,\ldots,x_n) = n! p_n^{(\beta)}(x_1,\ldots,x_n).
\end{equation*}

We can define the correlation functions for the eigenvalues as
\begin{equation*}
	\rho^{(\beta)}_{n,k}(x_1,\ldots,x_k) = \frac{n!}{(n-k)!} \int_{\R^{n-k}} p_n^{(\beta)}(x_1,\ldots,x_n)\ud x_{k+1}\ldots \ud x_n.
\end{equation*}

In the case of the GUE, the eigenvalues form a determinantal random point process.  In this case, 
\begin{equation*}
	\rho^{(2)}_{n,k}(x_1,\ldots,x_k) = \det (K_n(x_i,x_j))_{i,j=1}^k.
\end{equation*}
Where the kernel $K_n(x,y)$ is given by
\begin{equation*}
	K_n(x,y) = \sum_{i=0}^{n-1} \phi_i(x) \phi_i(y) e^{-\frac{1}{2}(x^2 + y^2)}
\end{equation*}
and $\phi_i$ are the orthonormal Hermite polynomials, i.e. $\int_{\R} \phi_i(x) \phi_j(x) e^{-x^2}\udx = \delta_{ij}$.  All these results and more can be found in Mehta's book, \cite{me} as well as Deift's books, \cite{de} and \cite{de2}.  

\subsection{Wigner's Semicircle Law}
An important result regarding Wigner random matrices is the famous semicircle law.  Denote by $\rho_{\sigma}$ the semicircle density function with support on $[-2\sigma, 2\sigma]$,
\begin{equation*}
	\rho_{\sigma} (x) = 
	\begin{cases} 
		\frac{1}{2\pi\sigma^2} \sqrt {4\sigma^2-x^2}, &|x| \leq 2\sigma, \\ 
		0, &|x| > 2\sigma. 
	\end{cases} 
\end{equation*}

\begin{theorem}[Semicircle Law] \label{semicircle}
Let $M_n = \left(m_{ij}\right)_{1 \leq i, j \leq n}$ be a Hermitian Wigner matrix where $m_{ij}$ has variance $\sigma^2$ for $1 \leq i < j \leq n$.  If $x_1 \leq x_2 \leq \ldots \leq x_n$ denote the ordered eigenvalues of $\frac{1}{\sqrt{n}} M_n$, then as $n \rightarrow \infty$, 
\begin{equation*}
	\frac{1}{n} \# \left\{ 1 \leq i \leq n : x_i \leq x \right\} \longrightarrow \int_{-2\sigma}^{x} \rho_{\sigma}(y) \udy
\end{equation*}
almost surely where $\#\{.\}$ denotes the number of elements in the set indicated.  
\end{theorem}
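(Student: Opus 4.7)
\medskip

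\noindent\textbf{Proof proposal.} The plan is to prove the semicircle law by the classical moment method, which reduces the convergence of the empirical spectral distribution to the identification of the limiting moments with those of $\rho_\sigma$. Let $W_n = \frac{1}{\sqrt{n}} M_n$ and denote the empirical spectral measure by $\mu_n = \frac{1}{n}\sum_{i=1}^n \delta_{x_i}$. The key identity is that, for every integer $k \geq 0$,
\begin{equation*}
	\int_{\R} x^k \, \mu_n(\ud x) = \frac{1}{n} \tr W_n^k = \frac{1}{n^{1+k/2}} \sum_{1 \leq i_1, \ldots, i_k \leq n} m_{i_1 i_2} m_{i_2 i_3} \cdots m_{i_k i_1}.
\end{equation*}
Since the even moments of the semicircle density $\rho_\sigma$ are $\int x^{2k}\rho_\sigma(x)\ud x = \sigma^{2k} C_k$, where $C_k = \frac{1}{k+1}\binom{2k}{k}$ is the $k$-th Catalan number, and the odd moments vanish, it suffices to show
\begin{equation*}
	\E \int_{\R} x^k \, \mu_n(\ud x) \longrightarrow
	\begin{cases} \sigma^{k}\,C_{k/2} & k \text{ even}, \\ 0 & k \text{ odd},\end{cases}
\end{equation*}
and that this convergence can be upgraded to almost sure convergence of $\int \varphi \, \ud \mu_n$ for each polynomial $\varphi$ (and then, by the Weierstrass--Portmanteau argument and a tightness estimate coming from exponential decay, to almost sure weak convergence, and hence to the stated statement about counting functions at continuity points).

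First I would compute the expectation. Index the sum by closed walks $i_1 \to i_2 \to \cdots \to i_k \to i_1$ on the complete graph on $\{1,\ldots,n\}$ and group the edges of the walk according to the partition they induce: the expectation of each summand depends only on the multi-set of edges traversed. Because the entries are centered and independent across distinct edges, any walk in which some edge is traversed exactly once contributes zero. Hence only walks in which each edge is traversed at least twice survive. The combinatorial heart of the argument is the standard lemma that, among such walks of length $k$ using $r$ distinct vertices, the maximum of $r$ is $k/2 + 1$ (so $k$ must be even for the leading contribution), attained precisely when every edge is traversed exactly twice and the walk, viewed as a closed walk on a tree, traces each edge once in each direction. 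These are in bijection with plane rooted trees on $k/2$ edges, counted by $C_{k/2}$. The leading term is thus $\frac{1}{n^{1+k/2}} \cdot n(n-1)\cdots(n-k/2) \cdot C_{k/2} \cdot \sigma^k \to \sigma^k C_{k/2}$, while all other configurations contribute $O(n^{-1})$ after using the exponential decay hypothesis to bound higher moments of the entries uniformly.

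Next I would upgrade to almost sure convergence. Here I would estimate $\var\bigl(\frac{1}{n}\tr W_n^k\bigr)$ by expanding as a double sum over pairs of closed walks; the leading contribution from pairs of walks that share no edge cancels against the square of the mean, so that $\var\bigl(\frac{1}{n}\tr W_n^k\bigr) = O(n^{-2})$. By Chebyshev's inequality and Borel--Cantelli, $\frac{1}{n}\tr W_n^k$ converges almost surely to $\sigma^k C_{k/2}$ (or $0$ when $k$ is odd). Since this holds for every fixed $k$, the empirical measure converges almost surely in moments, and the moment problem for the compactly supported semicircle density is determinate, so $\mu_n \Rightarrow \rho_\sigma(x)\ud x$ almost surely; in particular, the distribution function converges at every continuity point, which is what the theorem asserts.

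The main obstacle is the combinatorial bookkeeping in the first step, namely carefully verifying that only \emph{non-crossing pair-matching walks} contribute to the leading order in the expectation, and that the error from walks with higher edge multiplicities is genuinely $o(1)$ under the sole assumption of exponential decay of the entries. All other steps (the variance bound, the tightness/Borel--Cantelli argument, and the identification of Catalan numbers with semicircle moments) are standard once this combinatorial input is in place.
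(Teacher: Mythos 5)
The paper does not actually prove Theorem \ref{semicircle}; it simply cites \cite{bai}, \cite{pa}, and \cite{wig} for the proof. Your sketch is the classical Wigner moment method---precisely the argument of those sources---and its outline is correct: the trace--moment identity, the reduction to closed walks with every edge doubled (trees counted by Catalan numbers, matching the semicircle moments $\sigma^{2k}C_k$), the $O(n^{-2})$ variance bound with Borel--Cantelli, and moment determinacy of the compactly supported limit all go through, with the exponential-decay hypothesis guaranteeing the finiteness of all moments so that no truncation is needed. The only point worth flagging is that your combinatorics requires the entries to be centered, an assumption the paper's definition of a Wigner matrix leaves implicit; if the mean were nonzero one would first subtract the rank-one mean matrix, which does not affect the limiting empirical distribution.
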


A similar result holds as well for real symmetric Wigner matrices.  For a discussion of such results as well as a proof of Theorem \ref{semicircle} see \cite{bai}, \cite{pa}, and \cite{wig}.  

\subsection{Main Results}
In \cite{gu}, Gustavsson studies the distribution of eigenvalue number $k$, $x_k$, of the GUE when both $k$ and $n-k$ tend to infinity as $n \rightarrow \infty$.  For example, if $k = n - \log n$, then for large values of $n$, $x_k$ is relatively close to the right edge of the spectrum.  As another example, consider when $k=n/2$.  In this case, $x_k$ is in the middle of the spectrum.  In each case, Gustavsson showed that $x_k$ is normally distributed in the limit (see Theorems \ref{thm_bulk} and \ref{thm_edge} below when $\beta=2$ and $m=1$).

Gustavsson also considers the joint distribution of several eigenvalues $(x_{k_1},\ldots,x_{k_m})$ from the GUE where $k_1$, $n-k_m$ and $k_{i+1} - k_i$, $1 \leq i \leq m-1$, tend to infinity with $n$.  In this case, Gustavsson showed that the limiting distribution is an $m$-dimensional normal distribution (see Theorems \ref{thm_bulk} and \ref{thm_edge} below when $\beta=2$ and $m>1$).

In recent months, there have been a number of universality results for Wigner matrices.  In particular, it was shown that in the limit as $n \rightarrow \infty$, the statistical properties of $m$ eigenvalues from a Wigner matrix with exponential decay are independent of the probability distribution of the matrix entries (Mehta discusses the universality conjecture in his book, \cite{me}, see Conjectures 1.2.1 and 1.2.2).  For further details see results by Tao and Vu in \cite{tao} and \cite{tao2}, results by Erd\H{o}s, Schlein, and Yau, in \cite{yau}, \cite{yau2}, \cite{yau3}, and \cite{yau4}, and combined results by Erd\H{o}s, Ramirez, Schlein, Tao, Vu, and Yau in \cite{yau5}.  

Among their many results in \cite{tao} and \cite{tao2}, Tao and Vu prove that in the limit as $n \rightarrow \infty$, the fine spacing statistics for a Wigner random matrix are only determined by the first four moments of the entries.  As a consequence, Tao and Vu extend Gustavsson's results for the GUE to a class of Hermitian Wigner matrices with non-Gaussian entries whose first four moments match the Gaussian moments (see Corollary \ref{hermitian_wigner} below).

In this paper, we extend Gustavsson's results for the GUE to the GOE and GSE.  Then the powerful machinery developed by Tao and Vu generalizes our results to a class of real symmetric Wigner matrices with non-Gaussian entries.

\begin{remark}
In \cite{tw}, Tracy and Widom studied the distribution of the smallest and largest eigenvalues in the GUE.  Later in \cite{tw2}, they extended the result to include the smallest and largest eigenvalues in the GOE and GSE.
\end{remark}

For the theorems below we define
\begin{equation*}
	G(t) = \frac{2}{\pi} \int_{-1}^t \sqrt{1-x^2}\udx \qquad -1 \leq t \leq 1 .
\end{equation*}

Following Gustavsson's notation, we write $k(n) \sim n^\theta$ to mean that $k(n) = h(n) n^\theta$ where $h$ is a function such that, for all $\epsilon > 0$,
\begin{equation*}
	\frac{h(n)}{n^\epsilon} \longrightarrow 0 \text{ and } h(n) n^\epsilon \longrightarrow \infty
\end{equation*}
as $n \rightarrow \infty$.

\subsubsection{Results for the Gaussian Ensembles}
Below we present the main theorems which extend Gustavsson's results to the GOE and GSE.  

\begin{theorem}[The bulk] \label{thm_bulk}
Let $x_1 < x_2 < \cdots < x_n$ be the ordered eigenvalues from a random matrix drawn from the GOE, GUE, or GSE.  Consider $\{ x_{k_i} \}_{i=1}^m$ such that $0 < k_i - k_{i+1} \sim n^{\theta_i}$, $0 < \theta_i \leq 1$, and $\frac{k_i}{n} \rightarrow a_i \in (0,1)$ as $n \rightarrow \infty$.  Define $s_i = s_i(k_i,n) = G^{-1}(k_i/n)$ and set
\begin{equation*}
	X_i = \frac{ x_{k_i} - s_i \sqrt{2n} } { \left( \frac{ \log n} {2\beta(1-s_i^2)n} \right)^{1/2} } \quad i=1,\ldots,m
\end{equation*}
where $\beta=1,2,4$ corresponds to the GOE, GUE, or GSE.  Then as $n \rightarrow \infty$,
\begin{equation*}
	\Prob[X_1 \leq \xi_1,\ldots,X_m \leq \xi_m] \longrightarrow \Phi_{\Lambda}(\xi_1,\ldots,\xi_m)
\end{equation*}
where $\Phi_\Lambda$ is the cdf \footnote{Cumulative distribution function} for the $m$-dimensional normal distribution with covariance matrix $\Lambda_{i,j} = 1-\max\{\theta_k : i \leq k < j < m \}$ if $i < j$ and $\Lambda_{i,i} = 1$.
\end{theorem}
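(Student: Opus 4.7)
The plan is to extend Gustavsson's GUE argument to $\beta = 1, 4$ by translating the statement about eigenvalue locations into one about the counting function
\[
N_n(t) = \#\{i : x_i \leq t\}
\]
and then establishing a multivariate central limit theorem for $N_n$ in the bulk. Writing
\[
t_i = s_i\sqrt{2n} + \xi_i \left( \frac{\log n}{2\beta(1-s_i^2)\, n} \right)^{1/2},
\]
the elementary identity $\{x_{k_i} \leq t_i\} = \{N_n(t_i) \geq k_i\}$ turns the theorem into an $m$-variate CLT for $(N_n(t_1), \ldots, N_n(t_m))$ with the appropriate centering, variances, and covariances.

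For the marginals I would first use the semicircle law (Theorem~\ref{semicircle}) with a first-order Taylor expansion of $G$ at $s_i$ (using $G'(s_i) = \tfrac{2}{\pi}\sqrt{1-s_i^2}$) to get the centering
\[
\E N_n(t_i) = k_i + \xi_i \sqrt{\log n/(\beta \pi^2)} + o\bigl(\sqrt{\log n}\bigr).
\]
The Pfaffian correlation-kernel structure of the GOE and GSE then yields the bulk number-variance asymptotic $\var N_n(t_i) \sim \log n/(\beta \pi^2)$, which parallels the Dyson--Mehta / Costin--Lebowitz estimate used in the GUE case. A one-dimensional CLT for $N_n(t_i)$ combined with these asymptotics gives the univariate version: $(k_i - \E N_n(t_i))/\sqrt{\var N_n(t_i)} \to -\xi_i$, so $\Prob(X_i \leq \xi_i) = \Prob\bigl(N_n(t_i) \geq k_i\bigr)$ converges to $\Phi(\xi_i)$ by symmetry of the standard normal.

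For the joint statement I would invoke the Cram\'er--Wold device, reducing to asymptotic normality of each linear combination $\sum_{i=1}^m c_i N_n(t_i)$; each such combination is itself a step-function linear statistic handled by the same Pfaffian CLT. The covariances are pinned down via the polarization identity
\[
\cov(N_n(t_i), N_n(t_j)) = \tfrac12\bigl( \var N_n(t_i) + \var N_n(t_j) - \var(N_n(t_j) - N_n(t_i)) \bigr),
\]
combined with the intermediate-scale number-variance estimate $\var(N_n(t_j) - N_n(t_i)) \sim 2 \max_{i \leq k < j}\theta_k \cdot \log n / (\beta \pi^2)$, valid because the interval $(t_i, t_j]$ contains $k_j - k_i \sim n^{\max_{i \leq k < j}\theta_k}$ eigenvalues. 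Dividing by the common marginal variance then produces the announced covariance matrix $\Lambda_{ij} = 1 - \max\{\theta_k : i \leq k < j\}$.

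The main obstacle is the CLT for $N_n$ itself in the GOE and GSE. In the GUE case Gustavsson exploits the fact that for a determinantal process $N_n(t)$ is distributed as a sum of independent Bernoulli random variables (the eigenvalues of the kernel restricted to $(-\infty, t]$), reducing the CLT to a routine Lindeberg argument. No such exact decomposition is available for the Pfaffian GOE/GSE processes, so I would either compute cumulants directly from the matrix-valued correlation kernels or smooth the indicator $\mathbf{1}_{(-\infty, t_i]}$ and apply a known CLT for smooth linear statistics of Pfaffian processes (following Soshnikov or Johansson), and then control the smoothing error on the $\sqrt{\log n}$ scale. Carrying out the number-variance calculation with enough precision to confirm the constant $1/(\beta\pi^2)$, and propagating this through to the joint covariance, is the technical heart of the argument.
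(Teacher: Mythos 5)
Your reduction to the counting function, the use of $\{x_{k_i}\le t_i\}=\{N_n(t_i)\ge k_i\}$, the Cram\'er--Wold device, and the polarization identity for the covariances all match the skeleton of the actual argument, and your variance constant $\log n/(\beta\pi^2)$ and covariance matrix are the right targets. But the step you yourself flag as ``the main obstacle'' --- a CLT for $N_n(t)$ in the GOE and GSE together with the precise number-variance asymptotics --- is exactly the content of the theorem, and you leave it unresolved, offering only two candidate strategies (direct cumulant computation for the Pfaffian kernels, or smoothing plus a CLT for smooth linear statistics). Neither is routine: for Pfaffian processes the cumulants of counting statistics do not telescope into the clean $\tr(A-A^l)$ form that drives the Costin--Lebowitz argument, and the smoothing route is delicate precisely because the $\log n$ growth of the variance lives entirely in the sharp cutoff of the indicator, so controlling the smoothing error ``on the $\sqrt{\log n}$ scale'' is not a perturbative afterthought but the whole difficulty. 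As written, the proposal identifies the gap rather than closing it.

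The paper closes it by a completely different device that avoids Pfaffian analysis altogether: the Forrester--Rains decimation identities $\GUE_n=\mathrm{even}(\GOE_n\cup\GOE_{n+1})$ and $\GSE_n=\mathrm{even}(\GOE_{2n+1})\cdot\tfrac{1}{\sqrt2}$. Combined with Cauchy interlacing, the first identity gives
\begin{equation*}
\#_{\GUE_n}(I_n)=\tfrac12\bigl(\#_{\GOE_n}(I_n)+\#_{\GOE_n'}(I_n)+\xi_n'(I_n)\bigr),
\end{equation*}
with the two GOE counts independent and identically distributed and $|\xi_n'|\le 2$. The determinantal CLT for the GUE then yields convergence of the i.i.d. sum to $N(0,2)$, and a deconvolution lemma (if $X_n+Y_n\to N(0,2)$ with $X_n,Y_n$ i.i.d. then $X_n\to N(0,1)$) delivers the GOE CLT; the same identity transfers Gustavsson's expectation up to $O(1)$ and shows $\var(\#_{\GOE_n})=2\var(\#_{\GUE_n})+o(\log n)$, which is where the factor $\beta$ in the normalization comes from. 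The GSE case is then immediate from $x_k=y_{2k}/\sqrt2$. If you want to salvage your route you would need to actually carry out the Pfaffian cumulant or number-variance analysis, which is a substantial independent project; the interrelation trick is what makes the theorem accessible at the paper's level of technology.
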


\begin{theorem}[The edge] \label{thm_edge}
Let $x_1 < x_2 < \cdots < x_n$ be the ordered eigenvalues from a random matrix drawn from the GOE, GUE, or GSE.  Consider $\{x_{n-k_i}\}_{i=1}^m$ such that $k_1 \sim n^{\gamma}$ where $0 < \gamma < 1$ and $0 < k_{i+1} - k_i \sim n^{\theta_i}$, $0<\theta_i < \gamma$.  Set
\begin{equation*}
	X_i = \frac{ x_{n-k_i} - \sqrt{2n} \left( 1 - \left( \frac{3\pi k_i}{4 \sqrt{2} n} \right)^{2/3} \right) } { \left( \left( \frac{1}{12\pi} \right)^{2/3} \frac{ 2\log k_i }{\beta n^{1/3} k_i^{2/3} } \right)^{1/2} } \quad i=1,\ldots,m
\end{equation*}
where $\beta=1,2,4$ corresponds to the GOE, GUE, or GSE.  Then as $n \rightarrow \infty$,
\begin{equation*}
	\Prob[X_1 \leq \xi_1,\ldots, X_m \leq \xi_m] \longrightarrow \Phi_\Lambda(\xi_1,\ldots,\xi_m)
\end{equation*}
where $\Phi_\Lambda$ is the cdf for the $m$-dimensional normal distribution with covariance matrix $\Lambda_{i,j} = 1 - \frac{1}{\gamma} \max \{ \theta_k : i \leq k < j < m \}$ if $i < j$ and $\Lambda_{i,i} = 1$.  
\end{theorem}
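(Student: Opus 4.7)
The plan is to follow Gustavsson's strategy: reduce the joint limit for the ordered eigenvalues to a joint central limit theorem for the eigenvalue counting function, then invoke Gustavsson's $\GUE$ result together with classical interlacing/decimation identities between the three Gaussian ensembles to transfer the statement to the $\GOE$ and $\GSE$.

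First, since the eigenvalues are ordered, I will use the elementary identity $\{x_{n-k} \leq t\} = \{N_n(t,\infty) \leq k\}$, where $N_n(t,\infty) := \#\{i : x_i > t\}$. Choosing $t_i = t_i(\xi_i)$ equal to the centering appearing in the definition of $X_i$ plus $\xi_i$ times the stated scale, the event $\{X_i \leq \xi_i\}$ becomes exactly $\{N_n(t_i,\infty) \leq k_i\}$. So the theorem reduces to a joint CLT for the standardized vector $\bigl((N_n(t_i,\infty) - \E N_n(t_i,\infty))/(\var N_n(t_i,\infty))^{1/2}\bigr)_{i=1}^m$, combined with the asymptotic identifications $\E N_n(t_i,\infty) = k_i - \xi_i (\var N_n(t_i,\infty))^{1/2} + o(\cdot)$ and $\var N_n(t_i,\infty) \sim (\beta \pi^2)^{-1} \log k_i$ in the intermediate edge regime $k_i \sim n^\gamma$.

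For $\beta = 2$, this joint CLT is Gustavsson's theorem, proved via Costin--Lebowitz--Soshnikov: the $\GUE$ eigenvalues form a determinantal point process with the Hermite kernel $K_n$, so $N_n(t,\infty)$ is distributed as a sum of independent Bernoulli random variables indexed by the spectrum of $K_n$ restricted to $(t,\infty)$, and a Lyapunov CLT applies once the edge Plancherel--Rotach asymptotics supply the mean and variance. The covariance structure in the theorem, $\Lambda_{ij} = 1 - \gamma^{-1}\max\{\theta_k : i \leq k < j\}$, comes from computing
\begin{equation*}
	\cov(N_n(t_i,\infty), N_n(t_j,\infty)) = \E N_n(\max(t_i,t_j),\infty) - \iint_{(t_i,\infty)\times(t_j,\infty)} K_n(x,y)^2\, \ud x\, \ud y
\end{equation*}
and applying the Cram\'er--Wold device to the joint Bernoulli decomposition. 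To extend the CLT to $\beta = 1, 4$, I will invoke the classical Dyson--Mehta and Forrester--Rains identities relating the three Gaussian ensembles, in particular the decimation $\GSE_n \stackrel{d}{=} \mathrm{alt}(\GOE_{2n+1})$ and the superposition $\mathrm{alt}(\GOE_n \cup \GOE_{n+1}) \stackrel{d}{=} \GUE_n$. These convert to distributional identities between counting functions that allow the $\GUE$ CLT to be transferred to $\GOE$ (via a characteristic-function factoring argument on the superposition identity, which introduces a factor of $\beta = 1$ in the variance denominator) and then from $\GOE$ to $\GSE$ (via the decimation identity, which inserts the remaining factor to match $\beta = 4$).

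The hard part is the sharp asymptotic analysis of the mean, variance, and cross-covariance of the counting function in the intermediate edge window $k \sim n^\gamma$, $0 < \gamma < 1$. This regime sits between the hard-edge Tracy--Widom range ($k$ fixed) and the bulk ($k \sim n$), so neither a pure Airy-kernel nor a pure sine-kernel approximation is enough: a matched Plancherel--Rotach expansion of the Hermite functions is needed that captures both the varying local density (which produces the $k_i^{-2/3} n^{-1/3}$ scale factor) and the logarithmic growth of the variance (which produces the $\log k_i$ factor), and whose cross-terms at two points $t_i < t_j$ give rise to the $\gamma^{-1}$ multiple in $\Lambda_{ij}$. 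Once this analysis is in place for $\beta = 2$, the interlacing identities transfer the CLT to $\beta = 1, 4$ with the correct $\beta$-dependence appearing in the variance normalization.
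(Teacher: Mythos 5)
Your overall route is the same as the paper's: reduce $\{X_i\le\xi_i\}$ to events about counting functions, take Gustavsson's GUE asymptotics (mean, variance, covariance of $\#_{\GUE_n}$ in the intermediate regime $k\sim n^\gamma$) as given, and transfer the CLT to $\beta=1$ via the Forrester--Rains superposition identity and then to $\beta=4$ via the decimation $\GSE_n=\mathrm{even}(\GOE_{2n+1})\cdot\tfrac{1}{\sqrt2}$. The multidimensional bookkeeping (disjoint nested intervals, partial sums, Cram\'er--Wold, Soshnikov's joint CLT for the GUE counts) also matches.

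There is, however, one concrete gap in your GUE $\to$ GOE step. The superposition identity gives
\begin{equation*}
	\#_{\GUE_n}(I) = \tfrac{1}{2}\left[\#_{\GOE_n}(I) + \#_{\GOE_{n+1}}(I) + O(1)\right]
\end{equation*}
with the two GOE counts \emph{independent but not identically distributed}: one comes from an $n\times n$ matrix and the other from an $(n+1)\times(n+1)$ matrix. Your ``characteristic-function factoring argument'' only yields that the product of the two (distinct) characteristic functions converges to $e^{-t^2}$; without identical distribution this does not determine either factor --- a priori the two summands could carry asymptotic variances $c$ and $2-c$ for some $c\neq 1$. The paper repairs this with Cauchy interlacing: replace $\#_{\GOE_{n+1}}(I_n)$ by the count for the principal $n\times n$ submatrix of the $(n+1)\times(n+1)$ matrix, which is itself a $\GOE_n$ matrix independent of the first one, at the cost of an error bounded by $2$ per interval. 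Only then are the two summands genuinely i.i.d., and the deconvolution (which also requires a tightness argument before factoring characteristic functions along subsequences, as in the paper's Lemma \ref{lemma_tight}) gives $X_n\to N(0,1)$. You should either add this interlacing step or supply an independent computation of $\var(\#_{\GOE_n}(I_n))$, which the paper deliberately avoids. A second, minor point: the variance normalization for the GOE is $2\var(\#_{\GUE_n})$, i.e.\ the factor of $\beta$ enters through this doubling, not through a rescaling of the Forrester--Rains identity itself; and the $\tfrac{1}{\sqrt2}$ in the GSE decimation (which you omit) is needed to match the centering and scale in the statement.
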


\begin{remark}
The GUE ($\beta=2$) case in Theorems \ref{thm_bulk} and \ref{thm_edge} was shown by Gustavsson in \cite{gu}.
\end{remark}

\begin{remark} \label{rem_bulk_1d}
In the case $m=1$, Theorem \ref{thm_bulk} can be stated as follows.  
Set $t=t(k,n) = G^{-1}(k/n)$ where $k=k(n)$ is such that $k/n \rightarrow a \in (0,1)$ as $n \rightarrow \infty$.  If $x_k$ denotes eigenvalue number $k$ in the GOE, GUE, or GSE, it holds that, as $n \rightarrow \infty$,
\begin{equation*}
	\frac{x_k - t\sqrt{2n}}{\left( \frac{\log n}{2\beta(1-t^2)n} \right)^{1/2}} \longrightarrow N(0,1)
\end{equation*}
in distribution where $\beta=1,2,4$ corresponds to the GOE, GUE, or GSE.
\end{remark}

\begin{remark} \label{rem_edge_1d}
In the case $m=1$, Theorem \ref{thm_edge} can be stated as follows.  
Let $k$ be such that $k \rightarrow \infty$ but $\frac{k}{n} \rightarrow 0$ as $n \rightarrow \infty$ and let $x_{n-k}$ denote eigenvalue number $n-k$ in the GOE, GUE, or GSE.  Then it holds that, as $n \rightarrow \infty$,
\begin{equation*}
	\frac{ x_{n-k} - \sqrt{2n} \left( 1- \left( \frac{3 \pi k}{4 \sqrt{2} n} \right)^{2/3} \right) } { \left( \left( \frac{1}{12 \pi} \right)^{2/3} \frac{2 \log k}{\beta n^{1/3} k^{2/3}} \right)^{1/2} } \longrightarrow N(0,1)
\end{equation*}
in distribution where $\beta=1,2,4$ corresponds to the GOE, GUE, or GSE.
\end{remark}

\begin{remark}
One can omit the assumption that $k_i/n \rightarrow a_i$ in Theorem \ref{thm_bulk} and the conclusion still holds.  To see this, first consider the case $m=1$.  Let $x_k$ denote a sequence of eigenvalues from the bulk with $k=k(n)$ (where $k/n$ does not necessarily converge as $n \rightarrow \infty$).  Since $k/n < 1$, there exists a subsequence, say $k'=k(n_l)$, such that $k'/n_l \rightarrow a$ as $l \rightarrow \infty$ for some $a \in (0,1)$.  By Theorem \ref{thm_bulk}, the centered and scaled eigenvalues from the subsequence $x_{k'}$ converge to the standard normal distribution.  It follows that every subsequence has a further subsequence which converges in distribution to the standard normal.  Therefore, the entire sequence must converge in distribution to the standard normal.  

A similar argument allows one to omit the assumption that $k_i/n \rightarrow a_i$ in the case $m>1$.  
\end{remark}

\subsubsection{Results for Wigner Matrices}

In \cite{tao} and \cite{tao2}, Tao and Vu extend Gustavsson's results for the GUE to a more general class of Hermitian Wigner matrices.   

\begin{corollary}[Tao, Vu; Hermitian Wigner Matrices] \label{hermitian_wigner}
The conclusions of Theorems \ref{thm_bulk} and \ref{thm_edge} also hold with $\beta=2$ when $x_1 \leq x_2 \leq \ldots \leq x_n$ are the ordered eigenvalues of any other Wigner Hermitian matrix $M_n = \left(m_{ij}\right)_{1 \leq i,j \leq n}$ where the following moment conditions hold:
\begin{itemize}
	\item $\Re m_{ij}$ and $\Im m_{ij}$ have mean $0$ and variance $1/4$ for $1 \leq i < j \leq n$.
	\item $m_{ii}$ has mean $0$ and variance $1/2$ for $1 \leq i \leq n$.
	\item $\E((\Re m_{ij})^3) = \E((\Im m_{ij})^3) = 0$ for $1 \leq i < j \leq n$.
	\item $\E((\Re m_{ij})^4) = \E((\Im m_{ij})^4) = 3/16$ for $1 \leq i < j \leq n$.
\end{itemize}
\end{corollary}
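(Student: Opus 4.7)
The plan is to invoke the Four Moment Theorem of Tao and Vu from \cite{tao} (for the bulk case) together with its edge-sensitive refinement from \cite{tao2} (for the edge case), and to combine it with Theorems \ref{thm_bulk} and \ref{thm_edge} in the GUE case ($\beta=2$) already established by Gustavsson.

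First, I would verify that the moment conditions in the corollary are exactly those needed to match the first four moments of the entries of $M_n$ with those of the GUE. Mean and variance match by hypothesis; the vanishing third-moment condition is automatic for Gaussians by symmetry; and $\E((\Re m_{ij})^4)=3/16$ coincides with $3\sigma^4$ for $\sigma^2=1/4$, which is the fourth moment of a centered Gaussian with that variance. Thus the first four joint moments of the entries of $M_n$ agree with those of a GUE matrix.

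Next, fix a collection $k_1<\cdots<k_m$ satisfying the hypotheses of Theorem \ref{thm_bulk} (resp. Theorem \ref{thm_edge}). Applying the Four Moment Theorem yields, for every smooth compactly supported $F:\R^m\to\R$ whose derivatives up to some fixed order are bounded by $n^{c_0}$ for a small $c_0>0$,
\begin{equation*}
    \E F(x_{k_1},\ldots,x_{k_m}) - \E F(x_{k_1}^{\GUE},\ldots,x_{k_m}^{\GUE}) \longrightarrow 0 \qquad (n\to\infty),
\end{equation*}
where $x_{k_i}^{\GUE}$ denotes the corresponding eigenvalue of an independent GUE matrix. By Theorem \ref{thm_bulk} (resp. Theorem \ref{thm_edge}) in the GUE case, the centered and rescaled variables $(X_1^{\GUE},\ldots,X_m^{\GUE})$ formed from $x_{k_i}^{\GUE}$ converge jointly to $N(0,\Lambda)$.

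The last step is to upgrade smooth-test-function convergence to CDF convergence of the rescaled statistics $(X_1,\ldots,X_m)$. I would approximate each indicator $\mathbf{1}_{\{X_i\leq\xi_i\}}$ from above and below by smooth functions whose transition regions have width $\epsilon_n$, chosen small enough to beat the fluctuation scale of $x_{k_i}$ but large enough that the resulting derivatives are polynomially bounded in $n$ as required by the Four Moment Theorem. The gap between the upper and lower smooth approximants contributes an error controlled by $\Prob[X_i^{\GUE}\in[\xi_i-\epsilon,\xi_i+\epsilon]]$, which vanishes as $\epsilon\to 0$ uniformly in $n$ thanks to the GUE eigenvalue rigidity/localization estimates established by Tao--Vu (and, in complementary form, by Erd\H{o}s--Schlein--Yau). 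Combining these ingredients transfers the Gaussian limit from the GUE to $M_n$.

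The main obstacle is the last step: converting the smooth-function statement coming from the Four Moment Theorem into a statement about cumulative distribution functions, especially at the edge. In the bulk the eigenvalue spacing is of order $n^{-1/2}$ and the fluctuation scale is $(\log n/n)^{1/2}$, giving ample room for a smoothing scale $\epsilon_n$; at the edge the spacing drops to order $n^{-1/3}k^{-1/3}$ and the fluctuation scale to $(\log k)^{1/2} n^{-1/6} k^{-1/3}$, which is why the edge-adapted Four Moment Theorem of \cite{tao2} together with sharper edge rigidity is required to run the smoothing argument.
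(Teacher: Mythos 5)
Your proposal is correct and follows essentially the same route as the paper, which simply attributes this corollary to Tao and Vu and defers to the proof of Corollary 19 (and Remark 20) in \cite{tao}: verify that the stated moments match the Gaussian ones, apply the Four Moment Theorem (with the edge-adapted version of \cite{tao2} for Theorem \ref{thm_edge}), and pass from smooth test functions to CDFs by sandwiching indicators between smooth approximants. The only cosmetic remark is that the final step does not really need rigidity estimates uniformly in $n$ --- taking $n \rightarrow \infty$ first and then shrinking the smoothing parameter, the continuity of the limiting normal distribution already absorbs the $\epsilon$-shift.
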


In a similar fashion, we use Tao and Vu's Four Moment Theorem (see \cite{tao} and \cite{tao2}) to extend our results to a more general class of real symmetric Wigner matrices.

\begin{corollary}[Real Symmetric Wigner Matrices] \label{real_wigner_bulk}
The conclusions of Theorems \ref{thm_bulk} and \ref{thm_edge} also hold with $\beta=1$ when $x_1 \leq x_2 \leq \ldots \leq x_n$ are the ordered eigenvalues of any other real symmetric Wigner matrix $M_n = \left(m_{ij}\right)_{1 \leq i,j \leq n}$ where $m_{ij}$ has mean $0$ and variance $\frac{1+\delta_{ij}}{2}$ for $1 \leq i \leq j \leq n$ and $\E(m_{ij}^3) = 0$, $\E(m_{ij}^4) = 3/4$ for $1 \leq i < j \leq n$.  
\end{corollary}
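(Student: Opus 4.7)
The plan is to follow exactly the strategy Tao and Vu employed to establish their Hermitian analogue (Corollary \ref{hermitian_wigner}), only now using the GOE cases of Theorems \ref{thm_bulk} and \ref{thm_edge} as the base input. The Four Moment Theorem of \cite{tao,tao2} is formulated in sufficient generality to apply to real symmetric Wigner matrices just as it does to Hermitian ones, provided the first four moments of the entries match. Under the hypotheses of the corollary, the moments of $m_{ij}$ coincide with those of a GOE entry exactly (mean $0$, variance $\frac{1+\delta_{ij}}{2}$, vanishing third moment, and fourth moment $3/4$ for off-diagonal entries), so the Four Moment Theorem applies with no auxiliary moment-matching reduction.

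The first step is to rewrite the joint event $\{X_i \leq \xi_i : 1 \leq i \leq m\}$ in terms of the eigenvalue counting functions $N_{t_i} = \#\{j : x_j \leq t_i\}$, where $t_i$ is obtained by inverting the affine normalization defining $X_i$. Using the standard identity $\{x_{k_i} \leq t_i\} = \{N_{t_i} \geq k_i\}$, the joint CDF to be computed becomes a joint statistic in the counting functions, which is the natural object for Tao-Vu's framework.

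Next, each sharp indicator $\mathbf{1}_{(-\infty,t_i]}$ will be replaced by a smooth test function $\chi_{\varepsilon}$ supported on a slightly enlarged interval, with derivatives controlled at scale $\varepsilon$ so as to satisfy the regularity hypotheses of the Four Moment Theorem. Applied to the corresponding smooth functional of the eigenvalues of $\frac{1}{\sqrt{n}} M_n$, the Four Moment Theorem then transfers its expectation, up to negligible error, to the same functional of a GOE matrix. Theorem \ref{thm_bulk} (respectively \ref{thm_edge}) then supplies the Gaussian limit on the GOE side, completing the convergence for $M_n$.

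The main obstacle will be controlling the smoothing error: replacing sharp thresholds by smooth approximations introduces a discrepancy proportional to the probability of having an eigenvalue of $\frac{1}{\sqrt{n}} M_n$ lying in a small window of size $\varepsilon$ around each $t_i$. To absorb this, one needs a local eigenvalue rigidity estimate at $t_i$ valid for the general real symmetric Wigner matrix, bounding the probability that any eigenvalue falls in a window much smaller than the eigenvalue spacing. Such estimates are available in the literature (see \cite{tao,tao2,yau,yau2,yau3,yau4,yau5}) and they suffice to drive the smoothing error to zero as $n \to \infty$ after choosing $\varepsilon$ suitably. Once the smoothing is justified, the remainder of the argument is continuity of the Gaussian CDF, which yields the joint convergence claimed in Theorems \ref{thm_bulk} and \ref{thm_edge} for the general matrix $M_n$.
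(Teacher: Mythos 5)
Your proposal matches the paper's approach: the paper simply invokes the Four Moment Theorem of Tao and Vu and states that the argument is nearly identical to their proof of the Hermitian case (Corollary 19 in \cite{tao}, with Remark 20 for the multidimensional statement), which is precisely the transference-from-GOE argument you outline, including the reduction to counting functions, the smoothing of indicators, and the eigenvalue concentration estimates needed to control the smoothing error. Your reconstruction correctly identifies all the ingredients the cited proof uses, so there is nothing to add.
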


The proof of Corollary \ref{real_wigner_bulk} is nearly identical to the proof of Corollary 19 in \cite{tao} and we omit the details here.  For the multidimensional cases, see Remark 20 in \cite{tao}. 

\begin{remark}
It is also possible to consider a quaternion self-dual Hermitian $n \times n$ matrix $M_n = \left(m_{jk}\right)_{1 \leq j,k \leq n}$ such that
\begin{align*}
	m_{jk} &= m_{jk}^{(0)} + m_{jk}^{(1)} e_1 + m_{jk}^{(2)} e_2 + m_{jk}^{(3)} e_3, \qquad 1 \leq j < k \leq n, \\
	m_{jj} &= m_{jj}^{(0)}, \qquad 1 \leq j \leq n
\end{align*}
where $\{ m_{jk}^{(i)} : 1 \leq j < k \leq n, i=0,1,2,3 \}$ are i.i.d. real random variables and $\{m_{jj}^{(0)}: 1 \leq j \leq n \}$ are i.i.d. real random variables.  Such an ensemble of matrices would generalize the GSE, but historically have not been studied.  
\end{remark}

\begin{remark}
In order for eigenvalues $x_k$ and $x_m$ in the bulk to be independent in the limit, it must be the case that $\left| k - m \right| \sim n$.
\end{remark}

\section{Limiting Distribution of a Single Eigenvalue in the GOE and GSE}

In this section, we will prove Theorems \ref{thm_bulk} and \ref{thm_edge} for the GOE and GSE in the case $m=1$ (see Remarks \ref{rem_bulk_1d} and \ref{rem_edge_1d}).  Although we prove the general case for any $m \geq 1$ in Section \ref{multi_dim_proof}, we have found it instructive to start with the one-dimensional case.

\subsection{A Central Limit Theorem}

In the proof of the GUE case of Theorems \ref{thm_bulk} and \ref{thm_edge}, Gustavsson relies on the fact that the GUE defines a determinantal random point process.  Gustavsson utilizes a theorem due to Costin, Lebowitz, and Soshnikov (\cite{cl}, \cite{pe}, and \cite{so2}).  Let $\#_{\GUE_n}(I)$ denote the number of eigenvalues (from an $n \times n$ matrix drawn from the GUE) in the subset $I \subset \R$.  

\begin{theorem}[Costin-Lebowitz, Soshnikov] \label{costin_lebowitz} 
If $\var(\#_{\GUE_n}(I_n)) \rightarrow \infty$ as $n \rightarrow \infty$, then
\begin{equation*}
	\frac{ \#_{\GUE_n}(I_n) - \E[\#_{\GUE_n}(I_n)] } {\sqrt{\var(\#_{\GUE_n}(I_n))}} \longrightarrow N(0,1)
\end{equation*}
in distribution as $n \rightarrow \infty$.  
\end{theorem}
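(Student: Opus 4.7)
The plan is to exploit the determinantal structure of the GUE eigenvalue process to decompose the counting random variable $\#_{\GUE_n}(I_n)$ as a sum of \emph{independent} Bernoulli variables, and then apply the Lindeberg--Feller CLT, for which the divergence of the variance is exactly the required condition.

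First, I would invoke the following general fact about determinantal point processes: if the correlations are generated by a positive, trace-class self-adjoint kernel, then for any Borel set $I_n$ the number of points in $I_n$ has the same distribution as $\sum_k \xi_k^{(n)}$, where the $\xi_k^{(n)}$ are mutually independent Bernoulli random variables whose parameters $\lambda_k^{(n)} \in [0,1]$ are the eigenvalues of the compact operator on $L^2(\R)$ with kernel $\mathbf{1}_{I_n}(x) K_n(x,y) \mathbf{1}_{I_n}(y)$. The quickest route is to compare the Fredholm determinant expression
\begin{equation*}
	\E\!\left[ z^{\#_{\GUE_n}(I_n)} \right] = \det\!\bigl( I - (1-z) K_n \mathbf{1}_{I_n} \bigr) = \prod_k \bigl( 1 - (1-z) \lambda_k^{(n)} \bigr)
\end{equation*}
with the probability generating function of $\sum_k \xi_k^{(n)}$, which matches term by term.

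From this decomposition I immediately read off
\begin{equation*}
	\E[\#_{\GUE_n}(I_n)] = \sum_k \lambda_k^{(n)} = \tr\bigl(K_n \mathbf{1}_{I_n}\bigr), \qquad \var\bigl(\#_{\GUE_n}(I_n)\bigr) = \sum_k \lambda_k^{(n)}\bigl(1 - \lambda_k^{(n)}\bigr) =: \sigma_n^2.
\end{equation*}
The centered counting variable is then $S_n = \sum_k \bigl( \xi_k^{(n)} - \lambda_k^{(n)} \bigr)$, a sum of independent, mean-zero, uniformly bounded summands with $|\xi_k^{(n)} - \lambda_k^{(n)}| \leq 1$.

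Now I would apply the Lindeberg--Feller CLT to the triangular array $\{ (\xi_k^{(n)} - \lambda_k^{(n)})/\sigma_n \}_k$. For any fixed $\epsilon > 0$, once $n$ is large enough that $\sigma_n > 1/\epsilon$, the event $\{|\xi_k^{(n)} - \lambda_k^{(n)}| > \epsilon \sigma_n\}$ is empty for every $k$, so the Lindeberg sum vanishes identically. Hence the hypothesis $\sigma_n^2 \to \infty$ alone supplies the Lindeberg condition, and $S_n/\sigma_n \to N(0,1)$ in distribution, which is the claim.

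The only real obstacle is the first step, namely a careful justification of the Bernoulli decomposition: one must check that $K_n \mathbf{1}_{I_n}$ is trace-class with spectrum contained in $[0,1]$ (so that the Bernoulli parameters are well-defined), and verify the generating-function identity above. All subsequent steps are then essentially bookkeeping, since the uniform boundedness of the summands trivializes Lindeberg once the variance diverges.
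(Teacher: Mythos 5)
Your proof is correct, but it follows a genuinely different route from the one the paper takes. The paper proves the general determinantal version (Theorem \ref{general_costin_lebowitz}) by a cumulant computation: it introduces the cluster functions, derives the generating-function identity between factorial moments and cumulants, uses $T_l(\nu_t) = (-1)^l(l-1)!\tr(A_t^l)$ to get the recursion $C_l(\nu_t) = (-1)^l(l-1)!\tr(A_t - A_t^l) + \sum_s \alpha_{s,l}C_s(\nu_t)$, and then bounds $\tr(A_t - A_t^l) \leq (l-1)\tr(A_t - A_t^2)$ to conclude that all cumulants of order $l>2$ are $O(C_2(\nu_t))$ and hence vanish after normalization. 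You instead use the Bernoulli decomposition of Hough--Krishnapur--Peres--Vir\'ag together with Lindeberg--Feller; the paper itself acknowledges this alternative in the remark immediately following its proof, calling it the better probabilistic explanation. Your route is shorter and makes transparent why divergence of the variance is the only hypothesis needed (uniformly bounded summands trivialize the Lindeberg condition), and the one technical point you flag --- that $\mathbf{1}_{I_n}K_n\mathbf{1}_{I_n}$ is trace class with spectrum in $[0,1]$ --- is harmless here since the GUE kernel is a finite-rank orthogonal projection. What the paper's cumulant approach buys in exchange is that it extends directly to the multidimensional statement (Theorem \ref{general_costin_lebowitz_multi}), where one must control joint cumulants of counts in several disjoint sets and no joint Bernoulli decomposition is available; your method as written covers only the one-dimensional case.
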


\begin{remark}
We stated the theorem here in terms of the GUE, but the result is actually more general and holds for any sequence of determinantal random point fields.  We state the more general version of this result and give a proof in Appendix \ref{section:clt} (see Theorem \ref{general_costin_lebowitz}). 
\end{remark}

We begin by proving a version of Theorem \ref{costin_lebowitz} for the GOE.  To do this, we utilize the fact that Gustavsson already proved the GUE case of Theorems \ref{thm_bulk} and \ref{thm_edge} in \cite{gu} and a result by Forrester and Rains in \cite{fo} that relates the eigenvalues of the different ensembles.  
\begin{theorem}[Forrester-Rains] \label{forrester_rains}
The following relations hold between matrix ensembles:
\begin{align*}
	\GUE_n &= \mathrm{even}(\GOE_n \cup \GOE_{n+1}) \\
	\GSE_n &= \mathrm{even}(\GOE_{2n+1}) \cdot \frac{1}{\sqrt{2}}
\end{align*}
\end{theorem}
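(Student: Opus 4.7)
The plan is to prove both identities at the level of explicit joint eigenvalue densities. For the first identity, I would start from the product density of independent $\GOE_n$ and $\GOE_{n+1}$ samples with ordered eigenvalues $a_1<\cdots<a_n$ and $b_1<\cdots<b_{n+1}$:
\[
\frac{1}{Z} \prod_{i<j}|a_i-a_j|\,\prod_{i<j}|b_i-b_j|\, e^{-\frac12(\sum a_i^2+\sum b_i^2)}.
\]
Then I would reparametrize in terms of the merged ordered sequence $c_1<c_2<\cdots<c_{2n+1}$, writing $\lambda_i := c_{2i}$ for $i=1,\ldots,n$ (the ``even'' eigenvalues) and $\mu_j := c_{2j-1}$ for $j=1,\ldots,n+1$ (which are automatically forced to interlace the $\lambda_i$'s). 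The density in the new variables is the sum of the product density above over the $\binom{2n+1}{n}$ possible assignments of the $c_i$'s to the two GOE samples.

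The next step is to integrate out the $\mu_j$'s, subject to interlacing, to extract the marginal density of $\lambda$. The Gaussian weight factors cleanly as $e^{-\frac12 \sum \lambda_i^2}\, e^{-\frac12 \sum \mu_j^2}$, so only the combinatorial/polynomial part needs work. The key analytic ingredient is a Dixon--Anderson / de Bruijn type integral identity: after collapsing the sum over assignments, the resulting polynomial in $(\lambda,\mu)$ integrates over interlaced $\mu$'s against the Gaussian weight to produce exactly the squared Vandermonde $\prod_{i<j}(\lambda_j-\lambda_i)^2$ times the appropriate Gaussian in $\lambda$. That is the $\GUE_n$ joint density, up to the overall constant.

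For the second identity, the same framework applies with a single $\GOE_{2n+1}$ sample. Starting from its ordered density and integrating out the odd-indexed $\mu_j$'s subject to interlacing, the same Dixon--Anderson--type identity now outputs a fourth-power Vandermonde $\prod_{i<j}(\lambda_j-\lambda_i)^4$ in the even-indexed eigenvalues. The rescaling $\lambda \mapsto \lambda/\sqrt{2}$ is then exactly what is needed to align the Gaussian exponent $e^{-\frac12\sum \lambda_i^2}$ with the $\GSE$ weight $e^{-2\sum \lambda_i^2}$ under the $\beta=4$ convention $e^{-\frac{\beta}{2}\tr H^2}$, completing the match.

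The main obstacle throughout is the combined combinatorial/analytic step: collapsing the assignment sum and performing the interlaced $\mu$-integral. The underlying identity is a specialization of Selberg--Dixon--Anderson--type integrals; in principle it is classical, but in practice one must carefully handle the absolute values in $\prod|a_i-a_j|$ on the interlacing region (where the signs can be pinned down from the forced ordering) and then match the normalizing constants exactly. This is where essentially all of the work lives, and I expect it would occupy most of the write-up, while the reduction to this integral is bookkeeping.
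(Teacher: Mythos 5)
The paper offers no proof of this statement: it is imported verbatim from Forrester and Rains \cite{fo} (with the first relation going back to Dyson's conjecture and Gunson's proof \cite{gun} in the circular case) and is used as a black box. So there is no internal argument to compare against; what you have written is an outline of the density-level proof that the cited reference actually carries out, and as an outline it has the right architecture: merge, reindex by parity, sum over assignments, integrate out the interlacing set.

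However, two points in your write-up are wrong or conceal all of the difficulty. First, the claim that the Gaussian weight ``factors cleanly'' so that ``only the combinatorial/polynomial part needs work'' is false. Under the paper's conventions the $\GOE$ eigenvalue weight is $e^{-\frac12\sum x_i^2}$ while the $\GUE$ weight is $e^{-\sum x_i^2}$ and the $\GSE$ weight is $e^{-2\sum x_i^2}$. The product of the $\GOE_n$ and $\GOE_{n+1}$ densities carries total weight $e^{-\frac12\sum_{i=1}^{2n+1}c_i^2}$, so the factored prefactor on the retained points is only $e^{-\frac12\sum\lambda_i^2}$; the additional factor $e^{-\frac12\sum\lambda_i^2}$ needed to reach the $\GUE_n$ density must be \emph{produced by the interlaced integral itself} (it arises from the boundary terms of the Gaussian integrals over the unbounded and pinched intervals $(-\infty,\lambda_1)$, $(\lambda_{j-1},\lambda_j)$, $(\lambda_n,\infty)$). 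The same deficit breaks your $\GSE$ rescaling: starting from $e^{-\frac12\sum\lambda_i^2}$ and setting $\lambda=\sqrt2\,x$ yields $e^{-\sum x_i^2}$, not the required $e^{-2\sum x_i^2}$, and the gap is again exactly the factor the weighted integral must supply. This is precisely why the Forrester--Rains superposition/decimation identities hold only for special weights such as the Hermite weight and are not a generic polynomial computation. Second, the identity you invoke is not the literal Dixon--Anderson integral: for the $\GSE$ relation one integrates out the \emph{larger} (outer) interlacing family of $n+1$ points against the Gaussian weight, which is the Hermite-weight ``inverse'' direction, and for the $\GUE$ relation one must first collapse the sum over the $\binom{2n+1}{n}$ assignments via Gunson's summation lemma before any integration. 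Since the proposal neither states nor verifies these identities and its weight bookkeeping does not close, it is a plan rather than a proof; a complete argument would have to reproduce the Forrester--Rains computation, or else one should simply cite it, as the paper does.
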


\begin{remark}
The result by Forrester and Rains in \cite{fo} is actually much more general.  Here we only consider two specific cases. 
\end{remark}

\begin{remark}
The multiplication by $\frac{1}{\sqrt{2}}$ denotes scaling the $(2n+1) \times (2n+1)$ GOE matrix by a factor of $\frac{1}{\sqrt{2}}$.  
\end{remark}

\begin{remark}
The first statement can be interpreted in the following way.  Take two independent matrices from the GOE: one of size $n \times n$ and one of size $(n+1) \times (n+1)$.  Superimpose the eigenvalues on the real line to form a random point process with $2n+1$ particles.  Then the new random point process formed by taking the $n$ even particles has the same distribution as the eigenvalues of an $n \times n$ matrix from the GUE.  
\end{remark}

\begin{remark}
The first relation was originally conjectured in 1962 by Dyson for the circular unitary ensemble and the circular orthogonal ensemble (see \cite{dy}).  It was proven the same year by Gunson in \cite{gun}.  
\end{remark}

We will also need the following result.  
\begin{lemma} \label{lemma_tight}
	Let $\{X_n\}$ and $\{Y_n\}$ be sequences of random variables where $X_n$ and $Y_n$ are i.i.d for each $n \in \N$.  If $X_n + Y_n\longrightarrow \mathrm{N}(0,2)$ in distribution, then $X_n \longrightarrow \mathrm{N}(0,1)$ in distribution. 
\end{lemma}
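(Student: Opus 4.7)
The plan is to combine a tightness argument with a characteristic function computation. First, I would show that $\{X_n\}$ is tight by exploiting the i.i.d.\ structure. For any $M > 0$, independence gives
\[
\Prob(X_n > M)^2 = \Prob(X_n > M,\; Y_n > M) \leq \Prob(X_n + Y_n > 2M),
\]
and an analogous bound holds for the left tail, so
\[
\Prob(|X_n| > M) \leq 2 \sqrt{\Prob(|X_n + Y_n| > 2M)}.
\]
Since $X_n + Y_n$ converges in distribution, the family $\{X_n + Y_n\}$ is tight, and the right-hand side can be made uniformly small in $n$ by choosing $M$ sufficiently large. This proves tightness of $\{X_n\}$.

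Second, I would use the subsequence principle together with characteristic functions to identify every limit point. By Prokhorov's theorem, every subsequence of $\{X_n\}$ has a further subsequence $\{X_{n_k}\}$ converging in distribution to some random variable $X$. Let $\phi_n$ denote the characteristic function of $X_n$; by independence, the characteristic function of $X_n + Y_n$ is $\phi_n^2$, so the hypothesis gives $\phi_n(t)^2 \to e^{-t^2}$ pointwise. Along the weakly convergent subsequence, $\phi_{n_k}(t) \to \phi(t)$ where $\phi$ is the characteristic function of $X$, and thus $\phi(t)^2 = e^{-t^2}$. Since $\phi$ is continuous with $\phi(0) = 1$, the only continuous branch of the square root forces $\phi(t) = e^{-t^2/2}$, which is the characteristic function of $N(0,1)$. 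Hence every subsequential limit of $\{X_n\}$ equals $N(0,1)$ in distribution, and therefore the full sequence converges to $N(0,1)$.

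The main obstacle is the tightness step: without it, the pointwise limit of a sequence of characteristic functions need not be continuous at $0$, so one cannot pass from $\phi_n^2 \to e^{-t^2}$ to the fact that $\phi_n$ itself converges to a bona fide characteristic function. A subtler point at the end is the sign ambiguity of the square root; it is the continuity of $\phi$, combined with $\phi(0) = 1$, that pins down the positive branch globally and rules out the possibility that $\phi(t) = -e^{-t^2/2}$ on some exceptional set.
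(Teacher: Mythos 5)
Your proof is correct and follows essentially the same route as the paper's: tightness of $\{X_n\}$ via the product bound on the tails of $X_n+Y_n$, then Prokhorov plus the identity $\phi_n^2 \to e^{-t^2}$ to identify every subsequential limit, hence convergence of the full sequence. If anything, your handling of the square-root sign ambiguity (continuity of $\phi$ together with $\phi(0)=1$ forcing the positive branch) is a detail the paper's proof passes over silently.
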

\begin{proof}
We wish to show that $\{X_n\}$ is tight and that every subsequence $\{X_{n_k}\}$ has a further subsequence $\{X_{n_{k_l}}\}$ such that $X_{n_{k_l}} \longrightarrow \mathrm{N}(0,1)$ in distribution as $l \rightarrow \infty$.  We proceed as follows:
\begin{itemize}
	\item We will show that $\{X_n\}$ and $\{Y_n\}$ are tight.  Notice that since $X_n$ and $Y_n$ are i.i.d for each $n \in \N$, it is enough to just show $\{X_n\}$ is tight.  
	\item Assuming $\{X_n\}$ is tight, we can conclude that every subsequence $\{X_{n_k}\}$ has a further subsequence $\{X_{n_{k_l}}\}$ that converges in distribution.  Since $X_n$ and $Y_n$ are i.i.d, we have that
	\begin{equation*}
		\E\left[ e^{it (X_{n_{k_l}} + Y_{n_{k_l}})} \right] = \left( \E \left[ e^{itX_{n_{k_l}}} \right] \right)^2 \longrightarrow e^{-t^2} \text{ as } l \rightarrow \infty,
	\end{equation*}
by assumption.  Thus, we can conclude that every subsequence $\{X_{n_k}\}$ has a further subsequence $\{X_{n_{k_l}}\}$ that converges in distribution to $\mathrm{N}(0,1)$.  
	\item This would complete the proof, for if $X_n \not \rightarrow \mathrm{N}(0,1)$ in distribution, then there exists $\epsilon > 0$, $t \in \R$, and a subsequence $\{X_{n_k}\}$ such that
	\begin{equation*}
		\left| \E\left[ e^{it X_{n_k}} \right] - e^{-\frac{t^2}{2}} \right| > \epsilon.
	\end{equation*}
But this is a contradiction since there is a further subsequence $\{X_{n_{k_l}}\}$ that converges in distribution to $\mathrm{N}(0,1)$. 
\end{itemize}

All that remains is to show that $\{X_n\}$ is tight.  Let $\epsilon > 0$.  By taking both $M>0$ and $n>N$ large, 

\begin{align*}
	\epsilon > \Prob\left(X_n+Y_n > M\right) \geq \Prob\left(X_n>\frac{M}{2}, Y_n > \frac{M}{2}\right) = \left[ \Prob\left(X_n > \frac{M}{2} \right) \right]^2.
\end{align*}

Similarly, 
\begin{align*}
	\epsilon > \Prob\left(X_n+Y_n < -M\right) \geq \Prob\left(-X_n>\frac{M}{2}, -Y_n > \frac{M}{2}\right) = \left[ \Prob\left(-X_n > \frac{M}{2} \right) \right]^2.
\end{align*}

Thus, 
\begin{equation*}
	\Prob\left(\left|X_n\right| > \frac{M}{2}\right) \leq 2\sqrt{\epsilon} \text{ for all } n>N
\end{equation*}
and the result follows.
\end{proof}

\begin{lemma} \label{GOE_clt}
If $\var(\#_{\GUE_n}(I_n)) \rightarrow \infty$ as $n \rightarrow \infty$, then
\begin{equation*}
	\frac{ \#_{\GOE_n}(I_n) - \E[\#_{\GOE_n}(I_n)] } {\sqrt{2\var(\#_{\GUE_n}(I_n))}} \longrightarrow N(0,1)
\end{equation*}
in distribution as $n \rightarrow \infty$.  
\end{lemma}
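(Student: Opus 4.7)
The plan is to use the Forrester--Rains identity $\GUE_n = \mathrm{even}(\GOE_n \cup \GOE_{n+1})$ (Theorem \ref{forrester_rains}) to transfer the Costin--Lebowitz CLT for the GUE (Theorem \ref{costin_lebowitz}) to a CLT for the sum of two independent GOE counts, and then, after replacing the $\GOE_{n+1}$ count by an i.i.d.\ copy of the $\GOE_n$ count via a Cauchy interlacing coupling, to apply Lemma \ref{lemma_tight} to extract the CLT for a single GOE count.

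Concretely, let $M^{(1)} \sim \GOE_n$ and, independently, $M^{(2)} \sim \GOE_{n+1}$, and set $A_n = \#_{M^{(1)}}(I_n)$, $B_n = \#_{M^{(2)}}(I_n)$, and $U_n = \#_{\GUE_n}(I_n)$. Theorem \ref{forrester_rains} says that the even-indexed particles of the superposition of the eigenvalues of $M^{(1)}$ and $M^{(2)}$ have the same joint distribution as a $\GUE_n$ point process. Because the particles falling in $I_n$ occupy a run of consecutive indices in the global ordering of the superposition, the even-indexed and odd-indexed counts in $I_n$ differ by at most one, giving $A_n + B_n = 2U_n + \epsilon_n$ with $|\epsilon_n| \leq 1$. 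Since $\var(U_n) \to \infty$ by hypothesis, Theorem \ref{costin_lebowitz} and the boundedness of $\epsilon_n$ then yield
\begin{equation*}
    \frac{A_n + B_n - \E[A_n + B_n]}{\sqrt{2 \var(U_n)}} \longrightarrow N(0,2).
\end{equation*}

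The key step is to replace $B_n$ by a random variable that is $\GOE_n$-distributed and independent of $A_n$. Let $\tilde M$ be the top-left $n \times n$ principal submatrix of $M^{(2)}$; directly from the definition of the GOE, $\tilde M \sim \GOE_n$. Set $A_n' := \#_{\tilde M}(I_n)$; then $A_n'$ is equidistributed with $A_n$ and, being a deterministic function of $M^{(2)}$, is independent of $A_n$. Cauchy's interlacing theorem applied to the eigenvalues of $\tilde M$ and $M^{(2)}$ forces $|A_n' - B_n| \leq 1$, so the CLT above persists with $A_n'$ in place of $B_n$:
\begin{equation*}
    \frac{A_n + A_n' - \E[A_n + A_n']}{\sqrt{2 \var(U_n)}} \longrightarrow N(0,2).
\end{equation*}
Applying Lemma \ref{lemma_tight} to the i.i.d.\ pair $X_n := (A_n - \E A_n)/\sqrt{2\var(U_n)}$ and $Y_n := (A_n' - \E A_n')/\sqrt{2 \var(U_n)}$ now delivers the desired conclusion.

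The main obstacle is the interlacing coupling step. One needs to verify carefully that a principal $n \times n$ submatrix of a $\GOE_{n+1}$ matrix is itself $\GOE_n$-distributed (immediate from the definition) and that Cauchy interlacing yields the uniform bound $|A_n' - B_n| \leq 1$ for an arbitrary interval $I_n$ (a monotone counting-function argument). The remaining steps are routine bookkeeping, relying on the fact that $O(1)$ errors are killed by the divergent normalization $\sqrt{\var(U_n)}$.
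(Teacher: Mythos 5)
Your proof is correct and follows essentially the same route as the paper: the Forrester--Rains superposition identity, the replacement of the $\GOE_{n+1}$ count by the count for its $n \times n$ principal submatrix via Cauchy interlacing so as to obtain an i.i.d.\ pair of $\GOE_n$ counts, and Lemma \ref{lemma_tight} to extract the one-variable limit. The only quibble is that for a two-sided interval the interlacing bound is $|A_n' - B_n| \leq 2$ rather than $\leq 1$, but since the error is $O(1)$ and killed by the divergent normalization this is immaterial.
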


\begin{proof}
By Theorem \ref{forrester_rains}, we have that
\begin{equation*}
	\#_{\GUE_n}(I_n) = \frac{1}{2} \left[ \#_{\GOE_n}(I_n) + \#_{\GOE_{n+1}}(I_n) + \xi_n(I_n) \right]
\end{equation*}
where $\xi_n(I_n)$ takes values in $\{-1,0,1\}$.  Thus by Cauchy's interlacing theorem (see Lemma \ref{cauchy_interlace} in Appendix \ref{section:interlace}), we can write,
\begin{equation} \label{clt_start}
	\#_{\GUE_n}(I_n) = \frac{1}{2} \left[ \#_{\GOE_n}(I_n) + \#_{\GOE_{n}'}(I_n) + \xi'_n(I_n) \right]
\end{equation}
where we obtain $\GOE_{n}'$ from $\GOE_{n+1}$ by considering the principle submatrix of $\GOE_{n+1}$ and $\xi'_n(I_n)$ takes values in $\{-2,-1,0,1,2\}$.  Note that $ \#_{\GOE_n}(I_n)$ and $\#_{\GOE_{n}'}(I_n)$ are independent because $\GOE_{n+1}$ and $\GOE_n$ denote independent matrices from the GOE.  By taking expectation on both sides of \eqref{clt_start} we obtain
\begin{equation} \label{GOE_expectation_long}
	\E[\#_{\GUE_n}(I_n)] = \frac{1}{2} \left[ \E[\#_{\GOE_n}(I_n)] + \E[\#_{\GOE_{n}'}(I_n)] + \E[\xi'_n(I_n)] \right].
\end{equation}
Finally we subtract the expectation and divide by the standard deviation on both sides of \eqref{clt_start} to obtain
\begin{align*}
	\sqrt{2} \frac { \#_{\GUE_n}(I_n) - \E[\#_{\GUE_n}(I_n)] } { \sqrt{\var(\#_{\GUE_n}(I_n))} }   = &  \frac{ \#_{\GOE_n}(I_n) - \E[\#_{\GOE_n}(I_n)] } { \sqrt{2\var(\#_{\GUE_n}(I_n))} }  \\
	& + \frac{ \#_{\GOE_{n}'}(I_n) - \E[\#_{\GOE_{n}'}(I_n)] } { \sqrt{2\var(\#_{\GUE_n}(I_n))} }  \\
	& + \frac{ \xi'_n(I_n) -  \E[\xi'_n(I_n)] } { \sqrt{2\var(\#_{\GUE_n}(I_n))} }  \\
	= & X_n + Y_n + \epsilon_n.
\end{align*}
The left hand side converges to $N(0,2)$ by Theorem \ref{costin_lebowitz} and 
\begin{equation*}
	\left| \epsilon_n \right| \leq \frac{4}{\sqrt{2\var(\#_{\GUE_n}(I_n))}} \longrightarrow 0 \text{ almost surely as } n \rightarrow \infty.
\end{equation*}
Therefore by Lemma \ref{lemma_tight}, $X_n \longrightarrow N(0,1)$ in distribution as $n \rightarrow \infty$.
\end{proof}

\begin{remark}
As a consequence of equation \eqref{GOE_expectation_long}, we have that for any subset $I \subset \R$,
\begin{equation} \label{GOE_expectation}
	\E[\#_{\GUE_n}(I)] = \E[\#_{\GOE_n}(I)] + O(1).
\end{equation}
\end{remark}

\subsection{Gustavsson's Calculations for the GUE}

We will also need some calculations provided by Gustavsson in the following lemmas.
\begin{lemma}[Gustavsson] \label{GUE_bulk_expectation}
Let $t=t(k,n)$ be the solution to the equation
\begin{equation*}
	n\frac{2}{\pi} \int_{-1}^t \sqrt{1-x^2} \udx = k
\end{equation*}
where $k=k(n)$ is such that $k/n \rightarrow a \in (0,1)$ as $n \rightarrow \infty$.  The expected number of eigenvalues from the GUE in the interval
\begin{equation*}
	I_n = \left[ \sqrt{2n}t+x\sqrt{\frac{\log n}{2n}}, \infty \right)
\end{equation*}
is given by
\begin{equation*}
	\E[\#_{\GUE_n}(I_n)] = n-k-\frac{x}{\pi}\sqrt{(1-t^2)\log n} + O\left( \frac{\log n}{n} \right).
\end{equation*}
\end{lemma}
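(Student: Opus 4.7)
The plan is to exploit the determinantal structure of the GUE eigenvalue process. Since the one-point correlation function equals the Christoffel-Darboux kernel on the diagonal, the expected count may be written as
\[
\E[\#_{\GUE_n}(I_n)] = \int_{y_n}^{\infty} K_n(s,s)\,\ud s, \qquad y_n := \sqrt{2n}\,t + x\sqrt{\tfrac{\log n}{2n}},
\]
and the problem reduces to evaluating this integral up to an additive error of order $\log n/n$.

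I would split the integral at the macroscopic quantile $\sqrt{2n}\,t$ and handle the two pieces separately:
\[
\int_{y_n}^{\infty} K_n(s,s)\,\ud s \;=\; \int_{\sqrt{2n}\,t}^{\infty} K_n(s,s)\,\ud s \;-\; \int_{\sqrt{2n}\,t}^{y_n} K_n(s,s)\,\ud s.
\]
For the first piece I would invoke the quantitative semicircle law for the Hermite kernel---obtained from Plancherel-Rotach asymptotics together with the normalization $\int K_n = n$---in the form
\[
\int_{-\infty}^{\sqrt{2n}\,t} K_n(s,s)\,\ud s \;=\; n\,G(t) + O(\log n/n),
\]
which, combined with the defining equation $n G(t) = k$, gives $n-k + O(\log n/n)$ for the first piece. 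For the short-interval piece, writing $\delta_n := x\sqrt{\log n/(2n)}$, I would substitute the bulk asymptotic $K_n(s,s) = \tfrac{1}{\pi}\sqrt{2n-s^2} + (\text{oscillatory error})$ and Taylor-expand the smooth semicircle density about $s=\sqrt{2n}\,t$. The leading contribution is
\[
\tfrac{1}{\pi}\sqrt{2n(1-t^2)}\cdot\delta_n \;=\; \frac{x}{\pi}\sqrt{(1-t^2)\log n},
\]
and the second-order Taylor remainder contributes $O(\delta_n^2) = O(\log n/n)$ since the derivative of the semicircle density at $s=\sqrt{2n}\,t$ equals $-t/(\pi\sqrt{1-t^2})$, which is $O(1)$ throughout the bulk under the hypothesis $k/n \to a \in (0,1)$. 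Subtracting the two pieces produces the expansion stated in the lemma.

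The main obstacle is squeezing the $O(\log n/n)$ error out of the integrated semicircle law: a naive pointwise bound $K_n(s,s) - \tfrac{1}{\pi}\sqrt{2n-s^2} = O(1)$ in the bulk is too weak once integrated over a macroscopic interval. One must instead exploit the rapid oscillation of the Plancherel-Rotach remainder, whose frequency is of order $\sqrt{n}$, so that cancellation after integration yields the required sharp error term. Since the lemma is attributed to Gustavsson, an alternative route is simply to cite his direct verification of this quantitative estimate in the GUE setting and import the conclusion verbatim.
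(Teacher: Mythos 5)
The paper does not prove this lemma at all: it is quoted verbatim from Gustavsson \cite{gu} (his Lemma 2.2) and used as a black box, so your sketch is being compared against a citation rather than an argument. That said, your outline essentially reconstructs Gustavsson's own proof: the identity $\E[\#_{\GUE_n}(I_n)]=\int_{I_n}K_n(s,s)\,\ud s$, the split at the quantile $\sqrt{2n}\,t$, the normalization $\int K_n(s,s)\,\ud s=n$ together with the defining relation $nG(t)=k$ for the macroscopic piece, and the Taylor expansion of the semicircle density over the short interval of length $\delta_n$ (your computations of the leading term $\frac{x}{\pi}\sqrt{(1-t^2)\log n}$ and of the $O(\delta_n^2)=O(\log n/n)$ remainder are both correct). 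The one place where your proposal is genuinely only a sketch is the one you flag yourself: the claim $\int_{-\infty}^{\sqrt{2n}\,t}K_n(s,s)\,\ud s=nG(t)+O(\log n/n)$. A pointwise bulk bound on $K_n(s,s)-\frac{1}{\pi}\sqrt{2n-s^2}$ does not integrate to this, and in \cite{gu} the estimate is extracted from the Riemann--Hilbert (Deift et al.) asymptotics of the Hermite kernel, which give a uniform $O(1/n)$ relative error on compact subsets of the bulk plus an explicitly oscillatory correction whose phase has derivative of order $\sqrt{n}$ in the $s$-variable, so that integration by parts supplies the cancellation you anticipate. If you intend the lemma as an imported result, the honest course (and the one the paper takes) is to cite \cite{gu}; if you intend to prove it, the Plancherel--Rotach/Riemann--Hilbert input and the stationary-phase treatment of the oscillatory term must be made explicit, as everything else in your outline is routine. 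Note also that for the application in this paper any error of size $o(\sqrt{\log n})$ would suffice, so the sharp $O(\log n/n)$ is stronger than what is actually needed downstream.
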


\begin{lemma}[Gustavsson] \label{GUE_edge_expectation}
The expected number of eigenvalues in the interval $I_n = [ \sqrt{2n}t, \infty)$, where $t \rightarrow 1^{-}$ as $n \rightarrow \infty$, is given by 
\begin{equation*}
	\E[\#_{\GUE_n}(I_n)] = \frac{4 \sqrt{2}}{3 \pi} n(1-t)^{3/2} + O(1).
\end{equation*}
\end{lemma}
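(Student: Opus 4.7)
The plan is to evaluate $\E[\#_{\GUE_n}(I_n)]$ directly from the GUE one-point correlation function. Since the GUE eigenvalues form a determinantal point process with kernel $K_n(x,y)$ as defined in the excerpt,
\begin{equation*}
    \E[\#_{\GUE_n}(I_n)] = \int_{\sqrt{2n}\,t}^\infty K_n(x,x)\, dx,
\end{equation*}
so the problem reduces to asymptotics of the mean density $K_n(x,x)$ near the soft edge $x \approx \sqrt{2n}$. I would change variables $x = \sqrt{2n}\,y$ and invoke the Plancherel--Rotach asymptotics for the orthonormal Hermite polynomials, which give, uniformly on compact subsets of the bulk interval $(-1,1)$,
\begin{equation*}
    \frac{\sqrt{2n}}{n}\, K_n\bigl(\sqrt{2n}\,y,\, \sqrt{2n}\,y\bigr) \;=\; \frac{2}{\pi}\sqrt{1-y^2} \;+\; O(n^{-1}),
\end{equation*}
while in the edge zone $|y-1| = O(n^{-2/3})$ the kernel transitions, after the scaling $x = \sqrt{2n} + s/(\sqrt{2}\,n^{1/6})$, into the Airy kernel $K_{\mathrm{Ai}}$.

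Next, I would split the integral at $y = 1 - \delta_n$ for a cutoff with $n^{-2/3} \ll \delta_n \ll 1$. On the bulk piece $[t,\, 1-\delta_n]$, the semicircle estimate above yields the leading contribution $\frac{2n}{\pi}\int_t^{1-\delta_n} \sqrt{1-y^2}\, dy$. Expanding $\sqrt{1-y^2} = \sqrt{2(1-y)}\bigl(1 + O(1-y)\bigr)^{1/2}$ near $y = 1$ and integrating gives
\begin{equation*}
    \frac{2n}{\pi} \cdot \frac{2\sqrt{2}}{3}(1-t)^{3/2} \;=\; \frac{4\sqrt{2}}{3\pi}\,n(1-t)^{3/2},
\end{equation*}
with strictly subleading corrections coming from the $O(1-y)$ factor in the expansion and from the $O(n^{-1})$ remainder. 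On the complementary edge piece $[1-\delta_n,\, \infty)$, the Airy-kernel description, combined with the super-exponential decay of $K_{\mathrm{Ai}}(s,s)$ as $s \to +\infty$ and the fact that $\int_{s_0}^\infty K_{\mathrm{Ai}}(s,s)\, ds$ is finite for every fixed $s_0 \in \R$, forces this contribution to be at most $O(1)$.

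The main obstacle is making the split at $\delta_n$ rigorous so that both pieces are simultaneously controlled: one needs a quantitative Plancherel--Rotach estimate, uniform across the transition from bulk to edge, so that the oscillatory $O(n^{-1})$ error on $[t, 1-\delta_n]$ still contributes only $O(1)$ once multiplied by $n$ and integrated, and so that it glues cleanly to the Airy-tail bound on $[1-\delta_n, \infty)$. Such uniform asymptotics are classical (see \cite{de}) and are precisely the input that Gustavsson invokes in \cite{gu} to derive the stated expansion.
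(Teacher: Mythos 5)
The paper does not prove this lemma at all: it is quoted verbatim from Gustavsson \cite{gu} and used as a black box, so there is no in-paper argument to compare against. Your sketch is the standard route and is essentially the one Gustavsson follows: write $\E[\#_{\GUE_n}(I_n)]=\int_{I_n}K_n(x,x)\,\ud x$, use Plancherel--Rotach asymptotics to replace the density by the semicircle away from the edge, and control the edge zone via the Airy scaling limit. The normalizations you use (the factor $\frac{\sqrt{2n}}{n}$, the edge scaling $x=\sqrt{2n}+s/(\sqrt{2}\,n^{1/6})$) and the leading-order computation $\frac{2n}{\pi}\cdot\frac{2\sqrt{2}}{3}(1-t)^{3/2}$ are all correct, and you correctly identify that the real technical content is a Plancherel--Rotach estimate uniform through the bulk-to-edge transition (the Airy approximation alone does not cover $[1-\delta_n,\,1-Cn^{-2/3}]$).

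One quantitative step is too quick. The correction coming from the expansion $\sqrt{1-y^2}=\sqrt{2(1-y)}\,(1+O(1-y))^{1/2}$ contributes $O\!\left(n(1-t)^{5/2}\right)$ after multiplying by $n$ and integrating, and this is \emph{not} $O(1)$ unless $1-t=O(n^{-2/5})$; for example $1-t=n^{-1/10}$ gives a contribution of order $n^{3/4}$. So your argument actually proves $\E[\#_{\GUE_n}(I_n)]=\frac{4\sqrt{2}}{3\pi}n(1-t)^{3/2}\bigl(1+O(1-t)\bigr)+O(1)$, which is the form with the multiplicative correction that appears in Gustavsson's original lemma; the statement as transcribed here, with a bare additive $O(1)$, is stronger than what the semicircle expansion yields and cannot be obtained by declaring the $O(1-y)$ term ``strictly subleading.'' Either carry the $(1+O(1-t))$ factor explicitly (and check later that it is harmless on the fluctuation scale $\sqrt{\log k}$ in the regime where the lemma is applied), or restrict to $1-t=O(n^{-2/5})$; as written, the final absorption into $O(1)$ is a genuine gap.
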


\begin{lemma}[Gustavsson] \label{GUE_variance}
Let $\delta>0$ and suppose that $t$, which may depend on $n$, is such that $-1 + \delta \leq t < 1$ and $n(1-t)^{3/2} \rightarrow \infty$ as $n \rightarrow \infty$.  Then the variance of the number of eigenvalues from the GUE in the interval $I_n = [t\sqrt{2n},\infty)$ is given by 
\begin{equation*}
	\var(\#_{\GUE_n}(I_n)) = \frac{1}{2\pi^2} \log [n(1-t)^{3/2}](1+\eta(n))
\end{equation*}
where $\eta(n) \rightarrow 0$ as $n \rightarrow \infty$.  
\end{lemma}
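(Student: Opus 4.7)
The $\GUE_n$ eigenvalues form a determinantal point process with the Hermite kernel $K_n$, and since $K_n$ is the kernel of an orthogonal projection we have the identity $K_n(x,x) = \int_\R K_n(x,y)^2 \, \ud y$. Consequently
\begin{equation*}
\var(\#_{\GUE_n}(I)) = \int_I \int_{I^c} K_n(x,y)^2 \, \ud x \, \ud y
\end{equation*}
for any measurable $I$. Taking $I = I_n = [t\sqrt{2n},\infty)$ and translating via $u = x - t\sqrt{2n} \ge 0$, $v = y - t\sqrt{2n} \le 0$, my plan is to replace $K_n$ near the boundary by its local sine-kernel approximation and extract the resulting logarithm.

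In the classically allowed region $|x| < \sqrt{2n}$, Plancherel-Rotach (WKB) asymptotics for the Hermite wavefunctions, applied to the Christoffel-Darboux representation of $K_n$, yield
\begin{equation*}
K_n(t\sqrt{2n}+u,\, t\sqrt{2n}+v) = \frac{\sin(\pi \rho_n(t)(u-v))}{\pi(u-v)} + E_n(u,v),
\end{equation*}
where $\rho_n(t) = \tfrac{1}{\pi}\sqrt{2n(1-t^2)}$ is the Wigner density at $t\sqrt{2n}$ and $E_n(u,v)$ is negligible on a length scale $\ell_n(t)$, of order the distance to the soft edge $\sqrt{2n}$ (beyond which Airy behaviour takes over). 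Next, the change-of-variables identity $\int_0^\infty \!\int_{-\infty}^0 f(u-v)\,\ud u\,\ud v = \int_0^\infty w f(w)\,\ud w$ reduces the leading contribution to
\begin{equation*}
\int_0^{\ell_n(t)} \frac{\sin^2(\pi \rho_n(t) w)}{\pi^2 w}\,\ud w.
\end{equation*}
Substituting $z = \rho_n(t) w$ and using $\int_0^X (1-\cos 2\pi z)/z \,\ud z = \log X + O(1)$ then yields $\frac{1}{2\pi^2} \log[\rho_n(t)\ell_n(t)] + O(1)$. A direct computation shows $\rho_n(t)\ell_n(t) \asymp n(1-t)^{3/2}$, giving the asserted leading constant $\frac{1}{2\pi^2}$ in front of $\log[n(1-t)^{3/2}]$.

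The remaining task is controlling the contribution from outside the sine-kernel window $|u|,|v|\lesssim \ell_n(t)$. The tail over $w > \ell_n(t)$ is handled using edge/Airy asymptotics for $K_n$ combined with the oscillatory averaging of $\sin^2$, and contributes only $O(1)$; under the hypothesis $n(1-t)^{3/2}\to\infty$ this $O(1)$ is absorbed into the $(1+\eta(n))$ factor since $\log[n(1-t)^{3/2}]\to\infty$. The main obstacle is obtaining a Hermite-kernel approximation \emph{uniform} in $t$ up to the edge: as $t \to 1$ the local density $\rho_n(t)$ collapses and the sine/Airy crossover at $|x-\sqrt{2n}|\sim n^{-1/6}$ must be handled carefully. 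This is the principal technical step; the uniform Plancherel-Rotach expansions valid across the crossover supply the needed error bounds, and the hypothesis $n(1-t)^{3/2}\to\infty$ guarantees that $t\sqrt{2n}$ lies safely on the bulk side of the transition so that the logarithmic sine contribution dominates and the constant $\frac{1}{2\pi^2}$ is correctly extracted.
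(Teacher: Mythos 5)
First, note that the paper offers no proof of this lemma at all: it is imported verbatim from Gustavsson \cite{gu}, so the only meaningful comparison is with Gustavsson's own argument. Your outline is in fact the same route he takes: the projection identity $K_n(x,x)=\int_{\R}K_n(x,y)^2\,\ud y$ giving $\var(\#_{\GUE_n}(I))=\int_I\int_{I^c}K_n(x,y)^2\,\ud x\,\ud y$, the local sine-kernel approximation with frequency equal to the semicircle density $\rho_n(t)=\tfrac{1}{\pi}\sqrt{2n(1-t^2)}$, the reduction $\int_0^\infty\int_{-\infty}^0 f(u-v)\,\ud u\,\ud v=\int_0^\infty w f(w)\,\ud w$, and the logarithmic integral yielding $\tfrac{1}{2\pi^2}\log[\rho_n(t)\ell_n(t)]+O(1)$ with $\rho_n(t)\ell_n(t)\asymp n(1-t)^{3/2}$. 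All of that leading-order arithmetic checks out.

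The gap is that everything you label ``the principal technical step'' is precisely where the proof lives, and you assert it rather than carry it out. Two points in particular. First, the hypothesis allows $t\to 1$ subject only to $n(1-t)^{3/2}\to\infty$, i.e.\ $t$ may approach the edge at essentially the critical rate; so the Plancherel--Rotach expansion must be quantitative and uniform right up to the sine/Airy crossover scale. ``$E_n(u,v)$ is negligible on a length scale $\ell_n(t)$'' has to become an explicit pointwise bound whose contribution to $\int_I\int_{I^c}K_n^2$ — including the cross term $2\int\int(\text{sine kernel})\cdot E_n$, not only $\int\int E_n^2$ — is $o\bigl(\log[n(1-t)^{3/2}]\bigr)$. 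Second, the regime $w>\ell_n(t)$ together with the portion of $I_n\times I_n^c$ lying near or beyond the soft edge is covered by neither the pure bulk nor the pure Airy expansion when $t$ is close to $1$, and your claim that it contributes $O(1)$ is exactly what needs proof. One simplification you are entitled to but do not exploit: the lemma only asserts a multiplicative $1+\eta(n)$ on a quantity tending to infinity, so errors of size $o\bigl(\log[n(1-t)^{3/2}]\bigr)$ suffice — you do not need $O(1)$ tails, and Gustavsson himself only controls some corrections at the $O(\log\log)$ level. As it stands your text is a correct plan with an accurate diagnosis of where the difficulty sits, but the uniform asymptotic estimates that constitute the actual proof are missing.
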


\subsection{Proof of Main Results}

We now prove the main results.   
\begin{proof}[Proof of Theorem \ref{thm_bulk} for the GOE]
Set 
\begin{equation*}
	I_n = \left[ t\sqrt{2n} + \xi \left(\frac{\log n}{2(1-t^2)n} \right)^{1/2}, \infty \right). 
\end{equation*}
By Lemma \ref{GUE_bulk_expectation} and equation \eqref{GOE_expectation} we can take $x = \frac{\xi}{\sqrt{1-t^2}}$ and obtain
\begin{align*}
	\E[\#_{\GOE_n}(I_n)] &= n-k-\frac{x}{\pi}\sqrt{(1-t^2)\log n} + O\left( \frac{\log n}{n} \right) + O(1) \\
					&= n-k-\frac{\xi}{\pi}\sqrt{\log n} + O(1).
\end{align*}
Combining this with Lemma \ref{GUE_variance} we get
\begin{align*}
	\Prob \left[ \frac{x_k - t\sqrt{2n}}{\left( \frac{\log n}{2(1-t^2)n} \right)^{1/2}} \leq \xi \right] &= \Prob \left[ x_k \leq t\sqrt{2n} + \xi \left( \frac{\log n}{2(1-t^2)n} \right)^{1/2} \right] \\
	&= \Prob[ \#_{GOE_n}(I_n) \leq n-k] \\
	&= \Prob \left[ \frac{ \#_{\GOE_n}(I_n) - \E[\#_{\GOE_n}(I_n)] } { \sqrt{2\var(\#_{\GUE_n}(I_n))} } \leq \frac{ n-k- \E[\#_{\GOE_n}(I_n)] } {  \sqrt{2\var(\#_{\GUE_n}(I_n))} } \right] \\
	&= \Prob \left[ \frac{ \#_{\GOE_n}(I_n) - \E[\#_{\GOE_n}(I_n)] } { \sqrt{2\var(\#_{\GUE_n}(I_n))} } \leq \xi + \epsilon(n) \right]
\end{align*}
where $\epsilon(n) \rightarrow 0$ as $n \rightarrow \infty$.  By Lemma \ref{GOE_clt} the conclusion follows.
\end{proof}

\begin{proof}[Proof of Theorem \ref{thm_edge} for the GOE]
Set
\begin{equation*}
	I_n = \left[ \sqrt{2n}\left( 1 - \left( \frac{3 \pi k}{4 \sqrt{2} n} \right)^{2/3} \right) + \left( \left( \frac{1}{12 \pi} \right)^{2/3} \frac{2 \log k}{n^{1/3} k^{2/3}} \right)^{1/2} \xi, \infty \right).
\end{equation*}
By Lemma \ref{GUE_edge_expectation} and equation \eqref{GOE_expectation} we have that
\begin{equation*}
	\E[\#_{\GOE_n}(I_n)] = \frac{4 \sqrt{2} }{3 \pi} n(1-t)^{3/2} + O(1)
\end{equation*}
where 
\begin{equation*}
	t = 1 - \left( \frac{3 \pi k}{4 \sqrt{2} n} \right)^{2/3}  + \frac{1}{\sqrt{n}} \left( \left( \frac{1}{12 \pi} \right)^{2/3} \frac{ \log k}{n^{1/3} k^{2/3}} \right)^{1/2} \xi.
\end{equation*}
Combining this with Lemma \ref{GUE_variance} we get
\begin{align*}
	\Prob &\left[ \frac{ x_{n-k} - \sqrt{2n} \left( 1- \left( \frac{3 \pi k}{4 \sqrt{2} n} \right)^{2/3} \right) } { \left( \left( \frac{1}{12 \pi} \right)^{2/3} \frac{2 \log k}{n^{1/3} k^{2/3}} \right)^{1/2} } \leq \xi \right] \\
	 &\qquad = \Prob \Bigg[ x_{n-k} \leq \sqrt{2n}\left( 1 - \left( \frac{3 \pi k}{4 \sqrt{2} n} \right)^{2/3} \right) +  \left( \left( \frac{1}{12 \pi} \right)^{2/3} \frac{2 \log k}{n^{1/3} k^{2/3}} \right)^{1/2} \xi \Bigg] \\
	&\qquad = \Prob \left[ \#_{GOE_n}(I_n) \leq k \right] \\
	&\qquad = \Prob \left[ \frac{ \#_{\GOE_n}(I_n) - \E[\#_{\GOE_n}(I_n)] } { \sqrt{2\var(\#_{\GUE_n}(I_n))} } \leq \frac{ k- \E[\#_{\GOE_n}(I_n)] } {  \sqrt{2\var(\#_{\GUE_n}(I_n))} } \right] \\
	 &\qquad = \Prob \left[ \frac{ \#_{\GOE_n}(I_n) - \E[\#_{\GOE_n}(I_n)] } { \sqrt{2\var(\#_{\GUE_n}(I_n))} } \leq \xi + \epsilon(n) \right]
\end{align*}
where $\epsilon(n) \rightarrow 0$ as $n \rightarrow \infty$.  By Lemma \ref{GOE_clt} the conclusion follows.
\end{proof}

\begin{proof}[Proof of Theorems \ref{thm_bulk} and \ref{thm_edge} for the GSE]
Let $x_1 < x_2 < \ldots < x_n$ denote the ordered eigenvalues of an $n \times n$ matrix from the GSE and let $y_1 < y_2 < \ldots y_{2n+1}$ denote the ordered eigenvalues of an $(2n+1) \times (2n+1)$ matrix from the GOE.  By Theorem \ref{forrester_rains} it follows that $x_k = \frac{y_{2k}}{\sqrt{2}}$ in distribution and hence the result follows by the GOE case of Theorems \ref{thm_bulk} and \ref{thm_edge}.  
\end{proof}

\section{Joint Limiting Distribution of Several Eigenvalues in the GOE and GSE} \label{multi_dim_proof}

\subsection{A Multidimensional Central Limit Theorem}
For the multidimensional case, we will need the following theorem, \cite{so3}:

\begin{theorem}[Soshnikov] \label{costin_lebowitz_multi}
Let $\{I_n^{(1)},\ldots, I_n^{(k)}\}_{n=1}^\infty$ be a family of Borel subsets of $\R$, disjoint for any fixed $n$, with compact closure.  Suppose
\begin{equation*}
	\var \left( \sum_{j=1}^k \alpha_j \#_{\GUE_n}\left(I_n^{(j)}\right) \right) \quad \alpha_1,\ldots,\alpha_k \in \R
\end{equation*}
grows to infinity with $n$ in such a way that
\begin{equation} \label{clt_var_eq}
	\var \left( \#_{\GUE_n}\left(I_n^{(i)}\right) \right) = O \left( \var \left( \sum_{j=1}^k \alpha_j \#_{\GUE_n}\left(I_n^{(j)}\right) \right) \right)
\end{equation}
for any $1 \leq i \leq k$.  Then the central limit theorem holds:
\begin{equation*} 
	\frac{ \sum_{j=1}^k \alpha_j \#_{\GUE_n}\left(I_n^{(j)}\right) - \E \left[ \sum_{j=1}^k \alpha_j \#_{\GUE_n}\left(I_n^{(j)}\right) \right] }  { \sqrt{ \var \left( \sum_{j=1}^k \alpha_j \#_{\GUE_n}\left(I_n^{(j)}\right) \right) } } \longrightarrow N(0,1)
\end{equation*}
in distribution.
\end{theorem}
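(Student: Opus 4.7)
The plan is to prove this by the method of cumulants applied to the linear statistic
\[
    S_n \;=\; \sum_{j=1}^k \alpha_j \#_{\GUE_n}\bigl(I_n^{(j)}\bigr).
\]
Since the sets $I_n^{(j)}$ are pairwise disjoint, I can rewrite $S_n$ as $\sum_{i=1}^n f_n(x_i)$, where $f_n(x) = \sum_{j=1}^k \alpha_j \mathbf{1}_{I_n^{(j)}}(x)$ is a simple, compactly supported function with $\|f_n\|_\infty \le A := \max_j|\alpha_j|$. Thus $S_n$ is a single linear statistic of the GUE eigenvalue process, which is a determinantal point process with the Hermite kernel $K_n$. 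This reduction is the key step: although the theorem is stated for a vector of counts, the Cram\'er--Wold device is already built into it, and one only has to handle a single real-valued random variable at a time.

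For a determinantal point process with a locally trace-class kernel, the cumulants of a linear statistic have an explicit expression. Specifically, the Laplace transform equals the Fredholm determinant $\det\bigl(I + (e^{\lambda f_n} - 1)K_n\bigr)$, whose logarithm expands as a trace series; extracting the coefficient of $\lambda^m/m!$ writes the $m$-th cumulant $c_m(S_n)$ as a finite linear combination of traces of the form $\tr\bigl((f_n K_n)^{\ell_1}(f_n K_n)^{\ell_2}\cdots\bigr)$. The essential operator-theoretic estimate, which goes back to Costin--Lebowitz in the one-dimensional case and is refined by Soshnikov in the general setting, is a bound of the shape
\[
    |c_m(S_n)| \;\le\; C_m\,\|f_n\|_\infty^{\,m-2}\,\var(S_n)
\]
for each $m \ge 2$, where $C_m$ is a combinatorial constant independent of $n$.

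Granted this bound, the conclusion is immediate. Since $\|f_n\|_\infty \le A$ is uniform in $n$ and $\var(S_n) \to \infty$ by hypothesis,
\[
    \frac{|c_m(S_n - \E S_n)|}{\var(S_n)^{m/2}} \;\le\; \frac{C_m\, A^{m-2}}{\var(S_n)^{(m-2)/2}} \;\longrightarrow\; 0
\]
for every $m \ge 3$. Comparing $\log \E[\exp(it(S_n - \E S_n)/\sqrt{\var S_n})]$ term-by-term with $-t^2/2$ then gives the desired convergence in distribution to $N(0,1)$.

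The main obstacle is the cumulant estimate in the second step. When $f_n$ is just an indicator, the bound reduces to the fact that $\#_{\GUE_n}(I)$ has the same law as a sum of independent Bernoulli variables indexed by the eigenvalues of $K_n$ restricted to $I$, for which cumulants are well controlled by the variance. For a linear combination across several disjoint sets, however, the counts $\#_{\GUE_n}(I_n^{(j)})$ are correlated through $K_n$, and the natural cluster expansion produces an estimate of the form $|c_m(S_n)| \le C_m A^m \sum_j \var\bigl(\#_{\GUE_n}(I_n^{(j)})\bigr)$. This is exactly where the hypothesis \eqref{clt_var_eq} is needed: it converts the sum of individual variances on the right-hand side into $O(\var(S_n))$, which is what the method of cumulants requires. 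Without such a hypothesis, cancellation between the $\alpha_j$ could make $\var(S_n)$ much smaller than the individual terms, destroying the required decay of $c_m/\var(S_n)^{m/2}$. Carrying out the trace-norm bookkeeping uniformly in $n$ is the technically delicate part and is the contribution of Soshnikov's paper \cite{so3}.
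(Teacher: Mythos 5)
Your proposal is correct and follows essentially the same route as the paper's own argument (Theorem \ref{general_costin_lebowitz_multi} in Appendix \ref{section:clt}): the method of cumulants for determinantal point fields, with the higher cumulants expressed through cluster-function integrals $T_k$ that reduce to traces of products $\chi_{I^{(j_1)}}K\chi_{I^{(j_2)}}K\cdots$, bounded via $|\tr(AB)|\leq\|A\|\tr(B)$ and Cauchy--Schwarz by the individual variances, after which \eqref{clt_var_eq} converts everything to $O(\var(S_n))$; your framing via the single simple function $f_n=\sum_j\alpha_j\mathbf{1}_{I_n^{(j)}}$ versus the paper's joint cumulants of the count vector is only a cosmetic difference. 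One caution: the intermediate bound $|c_m(S_n)|\leq C_m\|f_n\|_\infty^{m-2}\var(S_n)$ you state in the second paragraph is an overstatement --- if it held as written, hypothesis \eqref{clt_var_eq} would be superfluous, and indeed the Costin--Lebowitz inequality $\tr(A-A^l)\leq(l-1)\tr(A-A^2)$ needs $0\leq A\leq 1$ and does not survive the passage from an indicator to a general bounded $f_n$ --- but your closing paragraph correctly replaces it with a bound in terms of $\sum_j\var(\#_{\GUE_n}(I_n^{(j)}))$ and uses the hypothesis exactly where the paper does.
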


\begin{remark}
The theorem in \cite{so3} is more general than the theorem stated here.  We state a more general version of this result and give a proof in Appendix \ref{section:clt} (see Theorem \ref{general_costin_lebowitz_multi}).  
\end{remark}

\begin{remark}
In general, if $\{X_n^{(1)},\ldots,X_n^{(k)}\}_{n=1}^\infty$ is a family of random variables and 
\begin{equation*}
	\frac{ \sum_{j=1}^k \alpha_j X_n^{(j)} - \E\left[ \sum_{j=1}^k \alpha_j X_n^{(j)} \right] } { \left( \var \left( \sum_{j=1}^k \alpha_j X_n^{(j)} \right) \right)^{1/2} }
\end{equation*}
converges to a normal distribution as $n \rightarrow \infty$ for all $\alpha_1,\ldots,\alpha_k \in \R$, then $X_n^{(1)},\ldots,X_n^{(k)}$ are jointly normally distributed in the limit, \cite{gut}.
\end{remark}

\begin{remark}
If \eqref{clt_var_eq} holds for every $\alpha_1,\ldots,\alpha_k \in \R$, then the random variables
\begin{equation*}
	\#_{\GUE_n}\left(I_n^{(1)}\right),\ldots, \#_{\GUE_n}\left(I_n^{(k)}\right)
\end{equation*}
are jointly normally distributed in the limit.  
\end{remark}

For the GOE, we will prove the following lemma.

\begin{lemma} \label{GOE_clt_multi}
Let $\{I_n^{(1)},\ldots, I_n^{(k)}\}_{n=1}^\infty$ be a family of Borel subsets of $\R$, disjoint for any fixed $n$, with compact closure.  Suppose
\begin{equation*}
	\var \left( \sum_{j=1}^k \alpha_j \#_{\GUE_n}\left(I_n^{(j)}\right) \right) \quad \alpha_1,\ldots,\alpha_k \in \R
\end{equation*}
grows to infinity with $n$ in such a way that
\begin{equation} \label{clt_var_GOE_eq}
	\var \left( \#_{\GUE_n}\left(I_n^{(i)}\right) \right) = O \left( \var \left( \sum_{j=1}^k \alpha_j \#_{\GUE_n}\left(I_n^{(j)}\right) \right) \right)
\end{equation}
for any $1 \leq i \leq k$.  Then for the GOE:
\begin{equation*} 
	\frac{ \sum_{j=1}^k \alpha_j \#_{\GOE_n}\left(I_n^{(j)}\right) - \E \left[ \sum_{j=1}^k \alpha_j \#_{\GOE_n}\left(I_n^{(j)}\right) \right] }  { \sqrt{ 2 \var \left( \sum_{j=1}^k \alpha_j \#_{\GUE_n}\left(I_n^{(j)}\right) \right) } } \longrightarrow N(0,1)
\end{equation*}
in distribution.
\end{lemma}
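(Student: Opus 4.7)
The plan is to mimic the one-dimensional argument in Lemma \ref{GOE_clt} interval by interval and then take a linear combination. Let $\GOE_n$ and $\GOE_{n+1}$ be independent matrices, and let $\GOE_n'$ denote the principal $n \times n$ submatrix of $\GOE_{n+1}$; since this submatrix is itself distributed as $\GOE_n$, the counting functions based on $\GOE_n$ and on $\GOE_n'$ are i.i.d.\ for each $n$. Applying Theorem \ref{forrester_rains} together with Cauchy's interlacing (Lemma \ref{cauchy_interlace}) to each interval $I_n^{(j)}$ separately yields
\begin{equation*}
\#_{\GUE_n}\!\left(I_n^{(j)}\right) = \tfrac{1}{2}\Bigl[\#_{\GOE_n}\!\left(I_n^{(j)}\right) + \#_{\GOE_n'}\!\left(I_n^{(j)}\right) + \xi_n'\!\left(I_n^{(j)}\right)\Bigr],
\end{equation*}
where $\xi_n'(I_n^{(j)}) \in \{-2,-1,0,1,2\}$.

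Now I would multiply by $\alpha_j$ and sum over $j$. Writing $S_n^{\GUE} := \sum_{j=1}^k \alpha_j \#_{\GUE_n}(I_n^{(j)})$, and defining $S_n^{\GOE}$, $S_n^{\GOE'}$ analogously, this gives
\begin{equation*}
S_n^{\GUE} = \tfrac{1}{2}\left[S_n^{\GOE} + S_n^{\GOE'} + \Xi_n\right], \qquad |\Xi_n| \leq 2\sum_{j=1}^k |\alpha_j|.
\end{equation*}
Subtracting expectations and dividing through by $\sqrt{\tfrac{1}{2}\var(S_n^{\GUE})}$ yields
\begin{equation*}
\sqrt{2}\cdot\frac{S_n^{\GUE} - \E[S_n^{\GUE}]}{\sqrt{\var(S_n^{\GUE})}} = X_n + Y_n + \epsilon_n,
\end{equation*}
where $X_n$ and $Y_n$ are the corresponding normalizations of $S_n^{\GOE}$ and $S_n^{\GOE'}$ by $\sqrt{2\var(S_n^{\GUE})}$, and $|\epsilon_n| = O(1/\sqrt{\var(S_n^{\GUE})}) \to 0$ because $\var(S_n^{\GUE}) \to \infty$ by hypothesis.

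The hypothesis \eqref{clt_var_GOE_eq} is exactly what Theorem \ref{costin_lebowitz_multi} requires, so the left-hand side converges in distribution to $N(0,2)$. On the right, $X_n$ and $Y_n$ are i.i.d.\ by the independence observation above, so Lemma \ref{lemma_tight} applies and delivers $X_n \longrightarrow N(0,1)$, which is precisely the claim. There is no serious obstacle here; the only check is that the interlacing-plus-Forrester--Rains error $\Xi_n$ remains bounded independently of $n$ when we take a finite linear combination, which is immediate since $k$ and the $\alpha_j$ are fixed. Everything else is a direct transcription of the proof of Lemma \ref{GOE_clt}, with Soshnikov's multidimensional CLT replacing the Costin--Lebowitz--Soshnikov theorem.
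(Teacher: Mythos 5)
Your proposal is correct and follows essentially the same route as the paper's own proof: the Forrester--Rains/interlacing decomposition applied to each interval, summation against the $\alpha_j$, Soshnikov's multidimensional CLT (Theorem \ref{costin_lebowitz_multi}) for the $N(0,2)$ limit, and Lemma \ref{lemma_tight} to extract the $N(0,1)$ limit for the GOE term. The only (cosmetic) difference is that your bound $|\Xi_n|\leq 2\sum_{j}|\alpha_j|$ is stated slightly more carefully than the paper's.
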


\begin{proof}
By following the proof of Lemma \ref{GOE_clt}, we can write
\begin{align} \label{clt_start_multi}
	\begin{split}
	\sum_{j=1}^k &\alpha_j \#_{\GUE_n}\left(I_n^{(j)}\right) = 
	 \frac{1}{2} \sum_{j=1}^k \alpha_j \left( \#_{\GOE_n}\left(I_n^{(j)}\right) + \#_{\GOE_{n}'}\left(I_n^{(j)}\right) + \xi'_n\left(I_n^{(j)}\right) \right)
	\end{split}
\end{align}
where $\xi'_n\left(I_n^{(j)}\right)$ takes values in $\{-2,-1,0,1,2\}$.  Define
\begin{align*}
	X_n = & \frac{ \sum_{j=1}^k \alpha_j \left( \#_{\GOE_n}\left(I_n^{(j)}\right) - \E\left[ \#_{\GOE_n}\left(I_n^{(j)}\right) \right] \right) } { \sqrt{ 2 \var \left( \sum_{j=1}^k \alpha_j \#_{\GUE_n}\left(I_n^{(j)}\right) \right) } }, \\
	Y_n = & \frac{ \sum_{j=1}^k \alpha_j \left( \#_{\GOE_{n}'}\left(I_n^{(j)}\right) - \E\left[ \#_{\GOE_{n}'}\left(I_n^{(j)}\right) \right] \right) } { \sqrt{ 2 \var \left( \sum_{j=1}^k \alpha_j \#_{\GUE_n}\left(I_n^{(j)}\right) \right) } }, \\
	\epsilon_n = & \frac{ \sum_{j=1}^k \alpha_j \xi'_n\left(I_n^{(j)}\right) - \E \left[ \sum_{j=1}^k \alpha_j \xi'_n\left(I_n^{(j)}\right) \right] } { \sqrt{ 2 \var \left( \sum_{j=1}^k \alpha_j \#_{\GUE_n}\left(I_n^{(j)}\right) \right) } }.
\end{align*}
Notice that for each $n \in \N$, $X_n$ and $Y_n$ are i.i.d.  By equation \eqref{clt_start_multi} and Theorem \ref{costin_lebowitz_multi}, we have that $X_n + Y_n + \epsilon_n$ is equal to
\begin{align*}
	 \sqrt{2} \frac{ \sum_{j=1}^k \alpha_j \#_{\GUE_n}\left(I_n^{(j)}\right) - \E \left[ \sum_{j=1}^k \alpha_j \#_{\GUE_n}\left(I_n^{(j)}\right) \right] } { \sqrt{ \var \left( \sum_{j=1}^k \alpha_j \#_{\GUE_n}\left(I_n^{(j)}\right) \right) } } \longrightarrow N(0,2)
\end{align*}
where the equality is everywhere and the convergence is in distribution.  Since
\begin{equation*}
	\left| \epsilon_n \right|  \leq \frac{4 \sum_{j=1}^k \alpha_j}{ \sqrt{ 2 \var \left( \sum_{j=1}^k \alpha_j \#_{\GUE_n}\left(I_n^{(j)}\right) \right) } } \longrightarrow 0 \text{ as } n \rightarrow \infty
\end{equation*}
almost surely, Lemma \ref{lemma_tight} implies that $X_n \longrightarrow N(0,1)$ in distribution as $n \rightarrow \infty$.  
\end{proof}

\begin{remark}
If \eqref{clt_var_GOE_eq} holds for every $\alpha_1,\ldots,\alpha_k \in \R$, then the random variables
\begin{equation*}
\#_{\GOE_n}\left(I_n^{(1)}\right),\ldots, \#_{\GOE_n}\left(I_n^{(k)}\right)
\end{equation*}
are jointly normally distributed in the limit.  
\end{remark}

\subsection{Proof of Main Results}

\begin{proof}[Proof of Theorem \ref{thm_bulk} for the GOE]
Let $k_i$, $s_i$, $\theta_i$, and $X_i$ as in the formulation of Theorem \ref{thm_bulk}.  Let $\xi_1,\ldots,\xi_m \in \R$ and define
\begin{align*}
	I_n^{(1)} &= \left( s_1 \sqrt{2n}+\xi_1 \left( \frac{ \log n } { 2 (1-s_1^2)n} \right)^{1/2},\infty \right), \\
	I_n^{(i)} &= \Bigg( s_i \sqrt{2n}+\xi_i \left( \frac{ \log n } { 2 (1-s_i^2)n} \right)^{1/2}, s_{i-1} \sqrt{2n}+\xi_{i-1} \left( \frac{ \log n } { 2 (1-s_{i-1}^2)n} \right)^{1/2} \Bigg]
\end{align*}
for $2 \leq i \leq m$.  For convenience, let
\begin{align*}
	S_{n,k} &= \sum_{j=1}^k \#_{\GOE_n}\left(I_n^{(j)}\right), \\
	\sigma_{n,k}^2 &= 2 \var \left( \sum_{j=1}^k \#_{\GUE_n}\left(I_n^{(j)}\right) \right)
\end{align*}
for $1 \leq k \leq m$.  Then we have that (for $n$ large enough)
\begin{align*}
	\Prob &[X_1 \leq \xi_1,\ldots,X_m \leq \xi_m] =
	  \Prob \left[ \frac{S_{n,l} - \E[S_{n,l}] }{ \sigma_{n,l}} \leq \frac{n-k_l - \E[S_{n,l}] } { \sigma_{n,l} }, 1 \leq l \leq m \right]
\end{align*}
We now need to show that the random variables
\begin{equation*}
	\#_{\GOE_n}\left(I_n^{(1)}\right), \#_{\GOE_n}\left(I_n^{(1)}\right)+\#_{\GOE_n}\left(I_n^{(2)}\right),\ldots,\sum_{j=1}^m \#_{\GOE_n}\left(I_n^{(j)}\right)
\end{equation*}
are jointly normal in the limit.  To do so, we will use Lemma \ref{GOE_clt_multi} and show that all linear combinations of the variables are normally distributed in the limit.  This is equivalent to showing that the random variables
\begin{equation*}
	\#_{\GOE_n}\left(I_n^{(1)}\right), \#_{\GOE_n}\left(I_n^{(2)}\right),\ldots,\#_{\GOE_n}\left(I_n^{(m)}\right)
\end{equation*}
are jointly normal in the limit.  Let $\alpha_1,\ldots,\alpha_m \in \R$ with $\alpha_1^2 + \cdots + \alpha_m^2 \neq 0$.  In \cite{gu}, Gustavsson showed that \eqref{clt_var_GOE_eq} holds for our choice of intervals $I_n^{(1)},\ldots,I_n^{(m)}$.  In fact, Gustavsson showed that the variance is of magnitude $\log n$.  Therefore the result follows by Lemma \ref{GOE_clt_multi}.  

To complete the proof, we will calculate the correlations between the random variables
\begin{equation*}
	\#_{\GOE_n}\left(I_n^{(1)}\right), \#_{\GOE_n}\left(I_n^{(1)}\right)+\#_{\GOE_n}\left(I_n^{(2)}\right),\ldots,\sum_{j=1}^m \#_{\GOE_n}\left(I_n^{(j)}\right).
\end{equation*}
If $j<i$, we have that $s_j - s_i \sim n^{-\gamma}$ where $\gamma = 1 - \max_{j \leq k < i} \theta_k$.  Then Gustavsson showed that for the GUE,
\begin{align*}
	\var & \left( \sum_{k=1}^i \#_{\GUE_n}\left(I_n^{(k)}\right) - \sum_{k=1}^j \#_{\GUE_n}\left(I_n^{(k)}\right) \right)  \\
	&= \var \left( \#_{\GUE_n}\left( \bigcup_{k=j+1}^i I_n^{(k)}\right) \right) = \frac{1-\gamma}{\pi^2} \log n + O(\log \log n) \text{ and } \\
	\var & \left( \sum_{k=1}^l \#_{\GUE_n}\left(I_n^{(k)}\right) \right) = \frac{1}{2 \pi^2} \log n + O(\log \log n)
\end{align*}
for any $1 \leq l \leq m$.  Also, by Theorem \ref{forrester_rains}, we have the following relation between the GOE and GUE
\begin{equation} \label{GUE_GOE_var_eq}
	\var \left( \#_{\GOE_n}\left(I_n^{(k)}\right) \right) = 2 \var \left( \#_{\GUE_n}\left(I_n^{(k)}\right) \right) + o(\log n)
\end{equation}
for any $1 \leq k \leq m$.  Thus we have that the correlation $\rho$ is given by
\begin{equation*}
	\rho(S_{n,i},S_{n,j}) = \frac{ \frac{1}{2} \left( \var (S_{n,i}) + \var (S_{n,j}) - \var(S_{n,i} - S_{n,j}) \right) } { \sqrt{ \var (S_{n,i}) \var (S_{n,j}) } } = \gamma + o(1).
\end{equation*}
\end{proof}

\begin{proof}[Proof of Theorem \ref{thm_edge} for the GOE]
This proof is very similar to the proof of the GOE case of Theorem \ref{thm_bulk}.  In this case, the intervals are given by
\begin{align*}
	I_n^{(1)} &= \left( \sqrt{2n} \left( 1 - C_1 \left(\frac{k_1}{n} \right)^{2/3} \right) + \xi_1 C_2 \left( \frac{ 2 \log k_1 }{ n^{1/3} k_1^{2/3}} \right)^{1/2}, \infty \right), \\
	I_n^{(i)} &= \Bigg( \sqrt{2n} \left( 1 - C_1 \left(\frac{k_i}{n} \right)^{2/3} \right) + \xi_i C_2 \left( \frac{ 2 \log k_i }{ n^{1/3} k_i^{2/3}} \right)^{1/2}, \\
	& \qquad \qquad \sqrt{2n} \left( 1 - C_1 \left(\frac{k_{i-1}}{n} \right)^{2/3} \right) + \xi_{i-1} C_2 \left( \frac{ 2 \log k_{i-1} }{ n^{1/3} k_{i-1}^{2/3}} \right)^{1/2} \Bigg]
\end{align*}
where $C_1$, $C_2$ are known constants and $2 \leq i \leq m$.  For sufficiently large $n$, the sets $I_n^{(1)},\ldots,I_n^{(m)}$ are intervals.  We will now prove that 
\begin{equation*}
	\#_{\GOE_n}\left(I_n^{(1)}\right), \#_{\GOE_n}\left(I_n^{(2)}\right),\ldots,\#_{\GOE_n}\left(I_n^{(m)}\right)
\end{equation*}
are jointly normally distributed in the limit as $n \rightarrow \infty$.  

In \cite{gu}, Gustavsson showed that \eqref{clt_var_GOE_eq} holds for our choice of intervals $I_n^{(1)},\ldots,I_n^{(m)}$.  In fact, Gustavsson showed that the variance is again of magnitude $\log n$ for any $\alpha_1^2 + \cdots + \alpha_m^2 \neq 0$.  Therefore the limiting distribution is normal by Lemma \ref{GOE_clt_multi}.

The calculations of the correlations is similar to the calculations in the GOE case of Theorem \ref{thm_bulk} and follow from Gustavsson's calculations for the GUE and equation \eqref{GUE_GOE_var_eq}.  
\end{proof}

\begin{proof}[Proof of Theorems \ref{thm_bulk} and \ref{thm_edge} for the GSE]
Let $x_1 < x_2 < \ldots < x_n$ denote the ordered eigenvalues of an $n \times n$ matrix from the GSE and let $y_1 < y_2 < \ldots y_{2n+1}$ denote the ordered eigenvalues of an $(2n+1) \times (2n+1)$ matrix from the GOE.  By Theorem \ref{forrester_rains} it follows that the joint distribution of $x_{k_1},\ldots,x_{k_m}$ is equal to the joint distribution of $\frac{y_{2k_1}}{\sqrt{2}},\ldots,\frac{y_{2k_m}}{\sqrt{2}}$.  Therefore the result follows by the GOE case of Theorems \ref{thm_bulk} and \ref{thm_edge}.  
\end{proof}

\appendix

\section{Interlacing Theorem} \label{section:interlace}
  The interlacing theorem we require is known as Cauchy's interlacing theorem for eigenvalues of Hermitian matrices (see \cite{fi}).  Recall that if two polynomials $f(x)$ and $g(x)$ have real roots $r_1 \leq r_2 \leq \ldots \leq r_n$ and $s_1 \leq s_2 \leq \ldots \leq s_{n-1}$, then we say that $f$ and $g$ interlace if 
\begin{equation*}
	r_1 \leq s_1 \leq r_2 \leq s_2 \leq \ldots \leq s_{n-1} \leq r_n
\end{equation*}

\begin{lemma}[Cauchy's Interlacing Theorem] \label{cauchy_interlace}
If $A$ is a Hermitian matrix and $B$ is a principle submatrix of $A$, then the eigenvalues of $B$ interlace with the eigenvalues of $A$.  
\end{lemma}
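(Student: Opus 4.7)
The plan is to prove this via the Courant--Fischer min-max characterization of eigenvalues of a Hermitian matrix. After a simultaneous permutation of rows and columns (which does not alter the eigenvalues of either matrix), I may assume without loss of generality that $B$ is the principal $(n-1)\times(n-1)$ submatrix obtained by deleting the last row and column of $A$. Write $\lambda_1 \leq \lambda_2 \leq \cdots \leq \lambda_n$ for the eigenvalues of $A$ and $\mu_1 \leq \cdots \leq \mu_{n-1}$ for those of $B$; the task reduces to showing that $\lambda_k \leq \mu_k \leq \lambda_{k+1}$ for each $1 \leq k \leq n-1$. Let $W = \{x \in \mathbb{C}^n : x_n = 0\}$, an $(n-1)$-dimensional subspace on which the quadratic form $\langle Ax,x\rangle$ coincides with $\langle Bx',x'\rangle$, where $x' \in \mathbb{C}^{n-1}$ denotes the vector of the first $n-1$ coordinates of $x$.

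For the upper bound $\mu_k \leq \lambda_{k+1}$, I would apply the min-max formula $\mu_k = \min_{\dim T = k} \max_{x \in T \setminus \{0\}} \langle Bx,x\rangle/\|x\|^2$. Let $U$ be the $(k+1)$-dimensional subspace of $\mathbb{C}^n$ spanned by eigenvectors of $A$ corresponding to $\lambda_1,\ldots,\lambda_{k+1}$, so that the Rayleigh quotient of $A$ is at most $\lambda_{k+1}$ on $U$. The standard dimension identity $\dim(U+W)+\dim(U \cap W) = \dim U + \dim W$ gives $\dim(U \cap W) \geq (k+1)+(n-1)-n = k$. Identifying $U \cap W$ with a $k$-dimensional subspace $T$ of $\mathbb{C}^{n-1}$, the equality $\langle Ax,x\rangle = \langle Bx',x'\rangle$ on $W$ implies that the $B$-Rayleigh quotient on $T$ is bounded by $\lambda_{k+1}$, and the desired bound follows.

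For the lower bound $\lambda_k \leq \mu_k$, I would use the companion formula $\lambda_k = \min_{\dim S = k} \max_{x \in S \setminus \{0\}} \langle Ax,x\rangle/\|x\|^2$. Take $T$ to be the $k$-dimensional subspace of $\mathbb{C}^{n-1}$ spanned by eigenvectors of $B$ for $\mu_1,\ldots,\mu_k$, so that $\max_{x \in T \setminus \{0\}} \langle Bx,x\rangle/\|x\|^2 = \mu_k$. Lifting $T$ into $W \subset \mathbb{C}^n$ by padding each vector with a zero in the last coordinate produces a $k$-dimensional subspace $S$ of $\mathbb{C}^n$ on which the $A$-Rayleigh quotient agrees with the $B$-Rayleigh quotient, and is therefore bounded by $\mu_k$. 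The min-max characterization of $\lambda_k$ then yields $\lambda_k \leq \mu_k$.

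The argument is purely linear-algebraic and there is no serious obstacle: the only subtlety is the dimension-count step $\dim(U \cap W) \geq \dim U + \dim W - n$, which is standard. Everything else is bookkeeping once the two Courant--Fischer formulas and the identification of quadratic forms on $W$ and $\mathbb{C}^{n-1}$ are in place.
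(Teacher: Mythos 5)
Your proof is correct and complete. Note, however, that the paper does not actually prove Lemma \ref{cauchy_interlace} at all: it simply states it in Appendix \ref{section:interlace} and cites Fisk's short note \cite{fi}. So you are supplying an argument the paper omits, and your route differs from the cited one. Fisk's proof is a rank-one-perturbation argument: writing the characteristic polynomial of $A + t e_n e_n^*$ as $\chi_A(x) - t\,\chi_B(x)$, one tracks the $n$ real, monotonically increasing eigenvalue branches as $t \to \infty$ (one escapes to $+\infty$, the remaining $n-1$ converge to the eigenvalues of $B$), and interlacing falls out of the monotonicity. Your Courant--Fischer argument is the classical textbook proof; it is longer to write out but entirely self-contained, needing only the min-max characterization and the dimension identity $\dim(U\cap W) \geq \dim U + \dim W - n$, both of which you invoke correctly, and the identification of the quadratic forms on $W$ and $\mathbb{C}^{n-1}$ is handled properly (the norms agree since the deleted coordinate vanishes). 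One small remark: your reduction to deleting the last row and column covers only the corank-one case, but that is exactly what the paper's definition of interlacing (degrees $n$ and $n-1$) and its application to $\GOE_n'$ as the principal $n \times n$ submatrix of $\GOE_{n+1}$ require, so nothing is missing for the purposes of Lemma \ref{GOE_clt}.
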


\section{Central Limit Theorems} \label{section:clt}
In this section, we will state and prove two central limit theorems for determinantal random point fields.  Let $\{ \mathcal{P}_t \}_{t \geq 0}$ be a family of random point fields on $\R^d$ such that their correlation functions $\rho_{t,k}$ have the determinantal form
\begin{equation*}
	\rho_{t,k}(x_1,\ldots,x_k) = \det \left( K_t(x_i, i_j) \right)_{1 \leq i,j \leq k}
\end{equation*}
where $K_t(x,y)$ is a Hermitian kernel.  Let $\{ I_t \}_{t \geq 0}$ be a collection of Borel subsets in $\R^d$ and let $A_t: L^2(I_t) \rightarrow L^2(I_t)$ denote an integral operator on $I_t$ with kernel $K_t$.  Define $\nu_t$ to be the number of particles in $I_t$, i.e. $\nu_t = \#(I_t)$.  Let $\E_t$ and $\var_t$ be the expectation and varaince with respect to the probability distribution of the random point field $\mathcal{P}_t$.  

\begin{theorem}[Costin-Lebowitz, Soshnikov] \label{general_costin_lebowitz} 
Let $A_t = K_t \cdot \chi_{I_t}$ be a family of trace class Hermitian operators associated with determinantal random point fields $\{ \mathcal{P}_t \}$ such that $\var_t (\nu_t) = \tr{\left(A_t - A_t^2\right)}$ goes to inifinity as $t \rightarrow \infty$.  Then
\begin{equation} \label{general_clt_eq}
	\frac{ \nu_t - \E_t[\nu_t] } { \sqrt{ \var_t (\nu_t) } } \longrightarrow N(0,1)
\end{equation}
in distribution as $t \rightarrow \infty$.  
\end{theorem}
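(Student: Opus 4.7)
I would proceed by the method of cumulants, leveraging the structural advantage of determinantal processes. The key classical fact is that for a trace-class Hermitian operator $A_t = K_t \cdot \chi_{I_t}$ arising from a determinantal point process, the eigenvalues $\{\lambda_i^{(t)}\}$ of $A_t$ necessarily lie in $[0,1]$, and the count $\nu_t$ has the same distribution as a sum of independent Bernoullis: $\nu_t \stackrel{d}{=} \sum_i B_i^{(t)}$ with $B_i^{(t)} \sim \mathrm{Bernoulli}(\lambda_i^{(t)})$. I would establish this by computing the probability generating function $\E_t[z^{\nu_t}] = \det(I - (1-z)A_t)$ from the Fredholm expansion of the determinantal correlations $\rho_{t,k}$, and then recognizing the right-hand side as the infinite product $\prod_i (1 - (1-z)\lambda_i^{(t)})$.

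Given this representation, additivity of cumulants over independent summands yields $C_j(\nu_t) = \sum_i C_j(\mathrm{Bernoulli}(\lambda_i^{(t)}))$. In particular $C_1(\nu_t) = \tr A_t = \E_t[\nu_t]$ and $C_2(\nu_t) = \sum_i \lambda_i^{(t)}(1 - \lambda_i^{(t)}) = \tr(A_t - A_t^2) = \var_t(\nu_t)$, matching the hypothesis. For $j \geq 2$, the cumulants of $\mathrm{Bernoulli}(p)$ vanish at both $p = 0$ and $p = 1$, so each is a polynomial in $p$ divisible by $p(1-p)$, giving $|C_j(\mathrm{Bernoulli}(p))| \leq c_j\, p(1-p)$ with $c_j$ depending only on $j$. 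Summing over $i$ produces $|C_j(\nu_t)| \leq c_j \var_t(\nu_t)$, and by homogeneity of cumulants under affine rescaling,
\begin{equation*}
\left| C_j\left( \frac{\nu_t - \E_t[\nu_t]}{\sqrt{\var_t(\nu_t)}} \right) \right| \leq \frac{c_j}{(\var_t(\nu_t))^{j/2 - 1}} \longrightarrow 0 \qquad (j \geq 3)
\end{equation*}
as $t \to \infty$. Since the first two cumulants of the normalized variable are $0$ and $1$, all higher cumulants tend to $0$, and the standard normal distribution is determined by its moments, the method of moments yields the desired convergence in distribution in \eqref{general_clt_eq}.

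The main obstacle I anticipate is rigorously justifying the Bernoulli sum representation in the infinite-dimensional trace-class setting: one must verify that the Fredholm product converges and equals $\E_t[z^{\nu_t}]$ through the determinantal structure. If that step proves cumbersome, a direct alternative is to work with the cumulant generating function via the expansion $\log \det(I + (e^s - 1)A_t) = \sum_{j \geq 1} \frac{(-1)^{j-1}}{j}(e^s - 1)^j \tr(A_t^j)$, which expresses $C_j(\nu_t)$ as a polynomial in the traces $\tr(A_t^j)$, and then estimate these using the spectral bound $\lambda_i^{(t)} \in [0,1]$ combined with the hypothesis $\tr(A_t - A_t^2) \to \infty$.
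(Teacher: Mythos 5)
Your proposal is correct, but it takes a genuinely different route from the proof written in the paper. The paper works combinatorially: it introduces the cluster functions $r_{t,k}$, derives the generating-function identity between the factorial moments $F_k(\nu_t)$ and the integrated cluster functions $T_k(\nu_t)$, uses the determinantal structure to write $T_l(\nu_t)$ in terms of $\tr(A_t^l)$, extracts the recursion $C_l(\nu_t) = (-1)^l(l-1)!\tr(A_t - A_t^l) + \sum_{s=2}^{l-1}\alpha_{s,l}C_s(\nu_t)$, and closes with the telescoping bound $0 \leq \tr(A_t - A_t^l) \leq (l-1)\tr(A_t - A_t^2)$ (valid because $0 \leq A_t \leq 1$) plus induction. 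Your primary route instead passes through the Bernoulli decomposition $\nu_t \stackrel{d}{=} \sum_i \mathrm{Bernoulli}(\lambda_i^{(t)})$ obtained from $\E_t[z^{\nu_t}] = \det(I-(1-z)A_t) = \prod_i(1-(1-z)\lambda_i^{(t)})$; this is exactly the probabilistic argument the paper itself credits, in the remark following its proof, to Hough--Krishnapur--Peres--Vir\'ag, and from that representation one can even skip cumulants entirely and invoke Lindeberg--Feller. Your cumulant bound $|C_j(\mathrm{Bernoulli}(p))| \leq c_j\,p(1-p)$ for $j \geq 2$ (from vanishing at $p=0$ and $p=1$) is a clean and correct way to finish, and your fallback via $\log\det(I+(e^s-1)A_t) = \sum_{j\geq 1}\frac{(-1)^{j-1}}{j}(e^s-1)^j\tr(A_t^j)$ is essentially the paper's argument in disguise. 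What each approach buys: yours is shorter and more transparent probabilistically, and requires only the standard fact that $0 \leq A_t \leq 1$ with $A_t$ trace class; the paper's cluster-function machinery is heavier for this one-dimensional statement but is exactly what generalizes to the joint count over several disjoint sets in Theorem \ref{general_costin_lebowitz_multi}, where no independent-Bernoulli decomposition of the joint distribution is available.
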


\begin{remark}
The result was proven by Costin and Lebowtiz in \cite{cl} for the case when $d=1$ and 
\begin{equation*}
K_t(x,y) = \frac{\sin\pi(x-y)}{\pi(x-y)} \text{ for all } t
\end{equation*} 
with $\left| I_t \right| \rightarrow \infty$.  The original paper contains a comment, due to Widom, that the result holds for more general kernels.  
\end{remark}

\begin{remark}
We will use the result that a locally trace class Hermitian operator $K$ defines a determinantal random point field if and only if $0 \leq K \leq 1$ (see \cite{pe} or \cite{so1}).  
\end{remark}

\begin{proof}[Proof of Theorem \ref{general_costin_lebowitz}]
We first start by introducing some notation.  For a random variable $X$, let $C_l(X)$ denote the $l$th cumulant of $X$ and $F_l(X)$ denote the $l$th factorial moment of $X$.  By definition,
\begin{align*}
	&\sum_{k=1}^\infty \frac{ (iz)^k } {k!} C_k(X) = \log \E\left[ e^{izX} \right], \\
	&F_l(X) = \E\left[X(X-1) \cdots (X-l+1) \right].
\end{align*}
By writing the characteristic function of $X$ in power series form and expressing moments in terms of factorial moments, we obtain the following relation
\begin{equation} \label{fact_moments_cumulants_eq}
	\sum_{k=0}^\infty \frac{ (e^{iz}-1)^k }{ k! } F_k(X) = \exp \left( \sum_{k=1}^\infty \frac{ (iz)^k } {k!} C_k(X) \right).
\end{equation}

In order to prove the theorem and show convergence in distribution, we will show that the cummulants of
\begin{equation*}
	\xi_t = \frac{ \nu_t - \E_t[\nu_t] } { \sqrt{ \var_t (\nu_t) } }
\end{equation*}
converge to the cumulants of the standard normal.  Since the first and second cumulants of $\xi_t$ are $0$ and $1$, respectivally, it is enough to show that the remaining cumulants vanish in the limit as $t \rightarrow \infty$.  In particular, we will show that $C_l(\nu_t) = O(C_2(\nu_t))$ for $l > 2$.  Since $C_2(\nu_t) = \tr(A_t - A_t^2) \rightarrow \infty$ as $t \rightarrow \infty$ by assumption, we would then have that
\begin{equation*}
	C_l(\xi_t) = \frac{ C_l(\nu_t) }{ (C_2(\nu_t))^{l/2} } \longrightarrow 0 \text{ for } l > 2
\end{equation*}
as $t \rightarrow \infty$.  

Thus, we have only to show that $C_l(\nu_t) = O(C_2(\nu_t))$ for $l > 2$.  In order to do so, we introduce the cluster functions $r_{t,k}$, which are given by
\begin{equation*}
	r_{t,k}(x_1,\ldots,x_k) = \sum_{m=1}^k \sum_G (-1)^{m-1} (m-1)! \prod_{j=1}^m \rho_{t,{\left|G_j\right|}}(x_{G_j})
\end{equation*}
where $G$ is a partition of $\{1,2,\ldots,k\}$ into $m$ parts $G_1,\ldots,G_m$ and $x_{G_j}$ denotes the collection of $x_i$ with indices in $G_j$.  Let 
\begin{equation*}
	T_k(\nu_t) = \int_{I_t} \cdots \int_{I_t} r_k(x_1,\ldots,x_k) \ud x_1 \ldots \ud x_k.
\end{equation*}

For a determinantal random point process, we can write, 
\begin{align*}
	F_k(\nu_t) & = \int_{I_t} \cdots \int_{I_t} \rho_{t,k}(x_1,\ldots,x_k) \ud x_1 \ldots \ud x_k \\
		& = \int_{I_t} \cdots \int_{I_t} \det \left( K_t(x_i, x_j) \right)_{1 \leq i,j \leq k} \ud x_1 \ldots \ud x_k \\
		& = \sum_G \prod_{i=1}^m (-1)^{\left|G_i\right|} \int_{I_t} \ldots \int_{I_t} r_{\left|G_i\right|}(x_{G_i}) \ud x_{G_i} \\
		& = \sum_G \prod_{i=1}^m T_{\left|G_i\right|}(\nu_t) \\
		& = \sum_{k_1 + \cdots + k_m = k} \frac{k!}{k_1! \cdots k_m!} \frac{1}{m!} T_{k_1}(\nu_t) \cdots T_{k_m}(\nu_t)
\end{align*}
where $G$ is a partition of $\{1,2,\ldots,k\}$ into $m$ parts $G_1,\ldots,G_m$ and $k_i \geq 1$ for each $1 \leq i \leq m$.  Thus, we can write the generating function relation
\begin{equation*}
	\sum_{k=0}^\infty \frac{z^k}{k!} F_k(\nu_t) = \exp \left( \sum_{k=1}^\infty \frac{z^k}{k!} T_k(\nu_t) \right).
\end{equation*}
Using the relation between cumulants and factorial momements \eqref{fact_moments_cumulants_eq}, we obtain
\begin{equation} \label{cumulant_gen_fun_eq}
	\sum_{k=1}^\infty \frac{(iz)^k}{k!} C_k(\nu_t) = \sum_{k=1}^\infty \frac{(e^{iz}-1)^k}{k!} T_k(\nu_t).
\end{equation}
Finally, for determinantal random point fields
\begin{equation*}
	T_l(\nu_t) = (-1)^l (l-1)! \tr(A_t)^l
\end{equation*}
and hence by equating coefficients in \eqref{cumulant_gen_fun_eq} we have that
\begin{equation} \label{cumulant_rec_rel_eq}
	C_l(\nu_t) = (-1)^l (l-1)! \tr(A_t - A_t^l) + \sum_{s=2}^{l-1} \alpha_{s,l} C_s(\nu_t)
\end{equation}
where $\alpha_{s,l}$, $2 \leq s \leq l-1$ are some combinatorial coefficients (irrelevant for our purposes).  

We follow Soshnikov's example from \cite{so2} and bound the trace term
\begin{align*}
	0 \leq \tr(A_t - A_t^l) & = \sum_{j=1}^{l-1} \tr(A_t^j - A_t^{j+1}) \\
		& \leq \sum_{j=1}^{l-1} \|A_t^{j-1} \| \cdot \tr(A_t - A_t^2) \leq (l-1) C_2(\nu_t).
\end{align*}
Therefore, by an induction argument and equation \eqref{cumulant_rec_rel_eq}, we conclude that $C_l(\nu_t) = O(C_2(\nu_t))$ for $l > 2$ and hence the result follows.  
\end{proof}

\begin{remark}
The proof contained in \cite{pe} gives a much better probabilistic explaination of the result than the proof presented here.  In short, it states that $\nu_t$ has the same distribution as the sum of independent Bernoulli random variables.  Thus, \eqref{general_clt_eq} follows immediately from the Lindeberg-Feller Central Limit Theorem for triangular arrays (see \cite{gut}).  
\end{remark}

We now prove a multidimensional version of Theorem \ref{general_costin_lebowitz}.

\begin{theorem}[Soshnikov] \label{general_costin_lebowitz_multi}
Let $K_t$ be a family of locally trace class Hermitian operators associated with determinantal random point fields $\{\mathcal{P}_t\}_{t \geq 0}$ and let $\{I_t^{(1)},\ldots,I_t^{(s)} \}_{t \geq 0}$ be a famlily of Borel subsets of $\R^d$, disjoint for any fixed $t$, with compact closure.  Suppose 
\begin{align*}
	\var\left( \#\left( I_t^{(j)}\right) \right) &= \sigma_j^2 a_t (1+o(1)) \qquad 1 \leq j \leq s, \\
	\cov\left( \#\left( I_t^{(i)}\right), \#\left( I_t^{(j)}\right) \right) &= \gamma_{i,j} a_t (1+o(1)) \qquad i \neq j
\end{align*}
for some positive sequence of real numbers $\{a_t\}_{t \geq 0}$ such that $a_t \rightarrow \infty$ as $t \rightarrow \infty$.  Then the random vector
\begin{equation*} 
	\left( \frac{ \#\left( I_t^{(1)}\right) - \E\left[\#\left( I_t^{(1)}\right)\right] }{ \sqrt{a_t} }, \ldots, \frac{ \#\left( I_t^{(s)}\right) - \E\left[\#\left( I_t^{(s)}\right)\right] }{ \sqrt{a_t} } \right)
\end{equation*}
converges in distribution to the $s$-dimensional normal distribution $N(0,\Lambda)$ where $\Lambda_{i,j} = \gamma_{i,j}$ for $i \neq j$ and $\Lambda_{i,i} = \sigma^2_i$ for $1 \leq i \leq s$.  
\end{theorem}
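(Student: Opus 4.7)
The plan is to apply the Cram\'er-Wold device to reduce joint convergence to a family of scalar central limit theorems, one for each $\alpha \in \R^s$, and then to prove each scalar CLT by extending the cumulant method used in Theorem \ref{general_costin_lebowitz} from indicator functions to bounded simple test functions.

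Fix $\alpha = (\alpha_1,\ldots,\alpha_s) \in \R^s$ and set
\begin{equation*}
S_t(\alpha) = \sum_{j=1}^s \alpha_j \#\!\left(I_t^{(j)}\right), \qquad V(\alpha) = \sum_{i,j=1}^s \alpha_i \alpha_j \Lambda_{i,j}.
\end{equation*}
The variance and covariance hypotheses give $\var(S_t(\alpha)) = V(\alpha)\, a_t(1+o(1))$. If $V(\alpha) = 0$, then $(S_t(\alpha) - \E[S_t(\alpha)])/\sqrt{a_t}$ tends to $0$ in probability, matching $N(0,V(\alpha))$. Otherwise $\var(S_t(\alpha)) \to \infty$, and since the $I_t^{(j)}$ are disjoint, $S_t(\alpha)$ is the linear statistic $\sum_{x \in \mathcal{P}_t} \phi_t(x)$ for the bounded simple function $\phi_t = \sum_j \alpha_j \chi_{I_t^{(j)}}$. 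For determinantal point fields, linear statistics obey the Fredholm identity
\begin{equation*}
\E\!\left[ e^{z S_t(\alpha)} \right] = \det\!\left( I + (e^{z \phi_t} - 1) K_t \right),
\end{equation*}
from which, via $\log\det = \tr\log$, one reads off the cumulants $C_l(S_t(\alpha))$ as polynomials in traces of alternating products of $K_t$ and the multiplication operator $M_{\phi_t}$, generalizing the scalar identity $T_l(\nu_t) = (-1)^l(l-1)!\tr(A_t^l)$ used in the proof of Theorem \ref{general_costin_lebowitz}.

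The remaining task, and the principal technical obstacle, is to show $C_l(S_t(\alpha)) = O(C_2(S_t(\alpha)))$ for each $l \geq 3$. In the scalar case this followed from the telescoping bound $\tr(A_t - A_t^l) \leq (l-1)\tr(A_t - A_t^2)$, which used both $0 \leq A_t \leq 1$ and the self-adjointness of $A_t$. For the weighted operator one still has $\|K_t\| \leq 1$ on $L^2(\cup_j I_t^{(j)})$ and $\|\phi_t\|_\infty \leq \max_j |\alpha_j|$, so the same telescoping can be carried out on each trace arising in the cumulant expansion; the signed nature of $\phi_t$ is handled by splitting $\phi_t = \phi_t^+ - \phi_t^-$ on the disjoint pieces $I_t^{(j)}$ or by symmetrizing to $M_{|\phi_t|}^{1/2} K_t M_{|\phi_t|}^{1/2}$ and absorbing the signs combinatorially. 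Plugging the resulting $O(C_2(S_t(\alpha)))$ bounds into the cumulant recursion of equation \eqref{cumulant_rec_rel_eq} forces $C_l(S_t(\alpha))/\var(S_t(\alpha))^{l/2} \to 0$ for $l \geq 3$, which establishes the scalar CLT for every $\alpha$; the Cram\'er-Wold theorem then delivers the joint convergence of the centered, $\sqrt{a_t}$-rescaled vector to $N(0,\Lambda)$.
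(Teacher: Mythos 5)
Your overall architecture --- reduce to one-dimensional central limit theorems for the linear statistic $S_t(\alpha)=\sum_j\alpha_j\#(I_t^{(j)})$ via Cram\'er--Wold, expand its cumulants through the Fredholm determinant into traces of alternating products of $K_t$ and indicator multiplications, and show every cumulant of order $l\ge 3$ is $O(a_t)=o(a_t^{l/2})$ --- is the same as the paper's, which works with the multi-index joint cumulants $C_l$ directly; for disjoint sets the two expansions coincide term by term, since $\phi_t^k=\sum_j\alpha_j^k\chi_{I_t^{(j)}}$. Your treatment of the degenerate direction $V(\alpha)=0$ by Chebyshev is fine.

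The gap is at the step you yourself call ``the principal technical obstacle,'' which you then dispose of in one sentence: ``the same telescoping can be carried out on each trace.'' It cannot. The inequality $0\le\tr(A_t-A_t^l)\le(l-1)\tr(A_t-A_t^2)$ is a statement about powers of a single positive contraction $A_t=\chi_{I_t}K_t\chi_{I_t}$, and $\tr(A_t-A_t^2)$ equals the variance only because $\chi^2=\chi$; for a weighted $\phi_t$ the operator $M_{|\phi_t|}^{1/2}K_tM_{|\phi_t|}^{1/2}$ is bounded by $\max_j|\alpha_j|$ rather than by $1$, its $\tr(B-B^2)$ is no longer $\var(S_t(\alpha))$, and splitting $\phi_t=\phi_t^+-\phi_t^-$ does not help because cumulants of a difference are not differences of cumulants. (Indeed, for general signed linear statistics the bound $C_l=O(C_2)$ is false without extra hypotheses --- this is precisely why Theorem \ref{costin_lebowitz_multi} carries condition \eqref{clt_var_eq}.) What actually closes the argument, and what the paper does, is to treat the two kinds of traces separately: the \emph{pure} traces $\tr\bigl((\chi_{I_t^{(j)}}K_t\chi_{I_t^{(j)}})^l\bigr)$ are handled by the one-dimensional telescoping together with the hypothesis $\var(\#(I_t^{(j)}))=\sigma_j^2a_t(1+o(1))$, while the \emph{mixed} traces, containing at least two distinct indicators, are reduced via Lemma \ref{trace_bound} and the Cauchy--Schwarz bound \eqref{trace_cauchy_schwarz} to $\tr\bigl(\chi_{I_t^{(j)}}K_t\chi_{I_t^{(i)}}K_t\chi_{I_t^{(j)}}\bigr)$ with $i\ne j$, which for disjoint sets equals $-\cov(\#(I_t^{(i)}),\#(I_t^{(j)}))$ and is therefore $O(a_t)$ by the covariance hypothesis. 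Your sketch never invokes the covariance hypothesis for $l\ge3$, which signals that the decisive mechanism is missing; supply the pure/mixed case split and the reduction of the mixed traces to the covariances, and the proof goes through.
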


\begin{remark}
The multidimensional case was proven by Soshnikov in \cite{so2} in the context of the Airy, Bessel, and sine kernels.  However, the proof given by Soshnikov is more general and applies to general determinantal random point fields.  
\end{remark}

In order to prove this result, we will need the following lemma, \cite{rs}.

\begin{lemma} \label{trace_bound}
If $A$ and $B$ are bounded operators on a separable Hilbert space $\mathcal{H}$ and $B \geq 0$ is trace class, then
\begin{equation*}
	\left| \tr\left(AB\right) \right| \leq \|A \| \tr \left(B \right).
\end{equation*}
\end{lemma}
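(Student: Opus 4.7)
The plan is to reduce the inequality to a direct computation in a basis that diagonalizes $B$. Since $B \geq 0$ is trace class, it is in particular compact and self-adjoint, so the spectral theorem for compact self-adjoint operators provides an orthonormal basis $\{e_n\}_{n \geq 1}$ of $\mathcal{H}$ consisting of eigenvectors of $B$, with eigenvalues $\lambda_n \geq 0$ satisfying $\sum_n \lambda_n = \tr(B) < \infty$.

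Next I would use this basis to expand the trace of $AB$. The operator $AB$ is trace class, since a bounded operator composed with a trace class operator is trace class, and the trace of a trace class operator is independent of the orthonormal basis in which it is computed. Choosing the eigenbasis of $B$ yields
\begin{equation*}
\tr(AB) = \sum_{n \geq 1} \langle AB e_n, e_n \rangle = \sum_{n \geq 1} \lambda_n \langle A e_n, e_n \rangle.
\end{equation*}
The triangle inequality together with the estimate $|\langle A e_n, e_n\rangle| \leq \|A\|$ (which follows from Cauchy--Schwarz and $\|e_n\| = 1$) then gives
\begin{equation*}
|\tr(AB)| \leq \sum_{n \geq 1} \lambda_n \|A\| = \|A\| \tr(B),
\end{equation*}
which is the desired inequality.

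The only nontrivial ingredients are the basis-independence of the trace for trace class operators and the fact that $AB$ is trace class when $A$ is bounded and $B$ is trace class; both are standard facts from the theory of trace class operators, and once they are invoked the computation in the eigenbasis of $B$ is essentially immediate. I do not expect any real obstacle here, since the positivity of $B$ is exactly what makes the eigenvalues nonnegative and allows us to pull out $\|A\|$ cleanly without any absolute-value sign changing $\sum_n \lambda_n$ into $\sum_n |\lambda_n|$.
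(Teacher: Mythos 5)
Your proof is correct. The paper does not actually prove this lemma---it simply cites Reed--Simon---and your argument (diagonalize the positive trace class operator $B$, use that $AB$ is trace class and that the trace is basis-independent, then bound $|\langle Ae_n,e_n\rangle|\leq \|A\|$ termwise) is exactly the standard textbook proof of that cited result, with all the needed hypotheses ($B\geq 0$ for nonnegativity of the $\lambda_n$, separability for the eigenbasis) invoked in the right places.
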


Also, we will need that in the space of Hilbert-Schmidt operators on a separable Hilbert space $\mathcal{H}$, $(A,B) = \tr(A^*B)$ defines an inner product, \cite{rs}.  Thus, by the Cauchy-Schwarz inequality we have that
\begin{equation} \label{trace_cauchy_schwarz}
	\left| \tr\left(AB \right) \right| \leq \sqrt{ \tr\left(A^*A \right) }  \sqrt{ \tr\left(B^*B \right) }.
\end{equation}

\begin{proof}[Proof of Theorem \ref{general_costin_lebowitz_multi}]
We begin by introducing some notation.  Let $k=(k_1,\ldots,k_s)$ be a multi-index.  We define $\left|k\right| = k_1 + \cdots + k_s$ and $k! = k_1! \cdots k_s!$.  Let $z=(z_1,\ldots,z_s)$ be an $s$-vector.  We will use the following notation
\begin{align*}
	z^k &= z_1^{k_1} \cdots z_s^{k_s}, \\
	\left(e^{iz}-1\right)^k &= \left(e^{iz_1}-1\right)^{k_1} \cdots \left(e^{iz_s}-1\right)^{k_s}.
\end{align*}
For a multi-index $l=(l_1,\ldots,l_s)$ let $C_l$ denote the $l$th joint cumulant and $F_l$ denote the $l$th joint factorial moment of the random variables 
\begin{equation*}
	\#\left( I_t^{(1)}\right), \ldots, \#\left( I_t^{(s)}\right).
\end{equation*}
That is,
\begin{align*}
	&\sum_{k > 0} \frac{(iz)^k}{k!} C_k = \log \E \left[ e^{iz \cdot X_t} \right], \\
	&F_l = \E\left[ \prod_{j=1}^s \#\left( I_t^{(j)}\right) \left( \#\left( I_t^{(j)}\right) - 1\right) \cdots \left(\#\left( I_t^{(j)}\right) - l_j + 1 \right) \right]
\end{align*}
where $X_t$ is the $s$-dimensional random vector whose $j$th component is given by $\#\left( I_t^{(j)}\right)$.  Just as in the one-dimensional case, we have a relation between the joint factorial moments and the joint cumulants,
\begin{equation} \label{fact_moments_cumulants_multi_eq}
	\sum_{k\geq 0} \frac{ (e^{iz}-1)^k }{ k! } F_k = \exp \left( \sum_{k>0} \frac{ (iz)^k } {k!} C_k \right).
\end{equation}

The idea of the proof is to show that the joint cumulants $C_l$ vanish in the limit when $t \rightarrow \infty$ for $\left|l\right| > 2$.  In particular, we will show that $C_l = O(a_t)$ for all $\left|l\right| > 2$.  

We use the cluster functions $r_{t,n}$, which are given by
\begin{equation*}
	r_{t,n}(x_1,\ldots,x_n) = \sum_{m=1}^n \sum_G (-1)^{m-1} (m-1)! \prod_{j=1}^m \rho_{t,{\left|G_j\right|}}(x_{G_j})
\end{equation*}
where $G$ is a partition of $\{1,2,\ldots,n\}$ into $m$ parts $G_1,\ldots,G_m$ and $x_{G_j}$ denotes the collection of $x_i$ with indices in $G_j$.  Let $T_k$ to be the integral of $r_{t,{\left|k\right|}}$ over the region
\begin{equation*}
	{I_t^{(1)}}^{k_1} \times \cdots \times {I_t^{(s)}}^{k_s}.
\end{equation*}
Following a similar argument as in the proof of Theorem \ref{general_costin_lebowitz}, we obtain a multi-dimensional analogue of equation \eqref{cumulant_gen_fun_eq},
\begin{equation*}
	\sum_{k\geq 0}\frac{z^k}{k!} F_k = \exp \left( \sum_{k>0} \frac{z^k}{k!} T_k \right)
\end{equation*}
and hence by \eqref{fact_moments_cumulants_multi_eq}, we can write
\begin{equation} \label{cumulant_gen_fun_multi_eq}
	\sum_{k>0} \frac{(iz)^k}{k!} C_k = \sum_{k>0} \frac{(e^{iz}-1)^k}{k!} T_k.
\end{equation}
We can now obtain a recursive relation for $C_l$ in terms of $T_l$ as we did in the one-dimensional case.  If only one index of $l$ is non-zero, we are in the one-dimensional case and obtain \eqref{cumulant_rec_rel_eq}.  Since we dealt with this case in Theorem \ref{general_costin_lebowitz}, we will assume that $l$ contains at least two non-zero indices.  In this case, we equate coefficients from equation \eqref{cumulant_gen_fun_multi_eq} and obtain
\begin{equation} \label{cumulant_rec_rel_multi_eq}
	C_l = T_l + \sum_{2 \leq \left|k\right| < \left|l\right|} \alpha_{k,l} C_k
\end{equation}
where $\alpha_{k,l}$, $2 \leq \left|k\right| < \left|l\right|$ are some combinatorial coefficients (irrelevant for our purposes).  For a determinantal random point field, $T_k$ can be expressed as a linear combination of traces of the form
\begin{equation} \label{trace_form_eq}
	\tr \left( \chi_{I_t^{(j_1)}} \cdot K_t \cdot \chi_{I_t^{(j_1)}} \cdot K_t \cdot \chi_{I_t^{(j_2)}} \cdots K_t \cdot \chi_{I_t^{(j_m)}} \cdot K_t \cdot \chi_{I_t^{(j_1)}} \right)
\end{equation}
such that if $k_i$ is nonzero then at least one of the indicators in each term in the linear combination is the indicator of $I_t^{(i)}$.  Therefore, using the bounds in Lemma \ref{trace_bound} and \eqref{trace_cauchy_schwarz}, we can bound the trace in \eqref{trace_form_eq} by terms of the form
\begin{equation*}
	\tr \left( \chi_{I_t^{(j)}} \cdot K_t  \cdot \chi_{I_t^{(i)}} \cdot K_t  \cdot \chi_{I_t^{(j)}} \right) = O(a_t)
\end{equation*}
where $i \neq j$ or terms of the form
\begin{equation*}
	\sqrt{ \tr \left( \chi_{I_t^{(j)}} \cdot K_t  \cdot \chi_{I_t^{(i)}} \cdot K_t  \cdot \chi_{I_t^{(j)}} \right) } \sqrt{ \tr \left( \chi_{I_t^{(\alpha)}} \cdot K_t  \cdot \chi_{I_t^{(\beta)}} \cdot K_t  \cdot \chi_{I_t^{(\alpha)}} \right) } = O(a_t)
\end{equation*}
where $i \neq j$ and $\alpha \neq \beta$.  Hence the result follows by an induction argument on \eqref{cumulant_rec_rel_multi_eq}.  
\end{proof}

\section*{Acknowledgment}
I would like to thank my advisor Alexander Soshnikov for the discussions that initiated this work and for the invaluable guidance and support that helped complete it.  I also thank Van Vu for the very useful comments.  

This material is based upon work supported by the National Science Foundation under Grant \#DMS-0636297.

\end{document}